\documentclass[11pt,a4paper,reqno]{amsart}
\usepackage[includehead,includefoot,margin=25mm]{geometry}

\usepackage{enumerate}
\usepackage{amsmath}
\usepackage{amsthm}
\usepackage{amssymb}
\usepackage{color}
\usepackage[pagewise,mathlines]{lineno}
\usepackage{hyperref}
\usepackage{amsfonts}
\usepackage[english]{babel}
\usepackage[none]{hyphenat}

\parskip 6pt
\newcommand{\Di}{\mathcal{D}}
\newcommand{\Ka}{\mathcal{K}}
\newcommand{\R}{\mathbb R}
\newcommand{\eps}{\varepsilon}
\newcommand{\expt}{\text{exp}}

\newcommand{\La}{\mathcal{L}}

\newtheorem{lemma}{Lemma}[section]
\newtheorem{cor}{Corollary}[section]
\newtheorem{prop}{Proposition}[section]
\newtheorem{thm}{Theorem}[section]

\newtheorem*{step1}{Step 1}
\newtheorem*{step2}{Step 2}

\newtheorem*{MPL}{Mountain Pass Theorem (Ambrosetti-Rabinowitz)}

\theoremstyle{proof}
\newtheorem*{demTeo}{Proof of Theorem \ref{TeoBestConst}}
\newtheorem*{SubcriticalProof1}{Proof of Theorem \ref{subcritical} for $p<q$}
\newtheorem*{SubcriticalProof2}{Proof of Theorem \ref{subcritical} for $q\leq p$}
\newtheorem*{ExistenceProof}{Proof of Theorem \ref{ExistsSolution}}
\newtheorem*{proofEquiv}{Proof of Theorem \ref{FractSpaces}}
\theoremstyle{remark}
\newtheorem{example}{Example}[section]

\begin{document}

\title{non-local equations and optimal  Sobolev inequalities on compact manifolds}

\author[C. Rey]{Carolina Ana Rey}
\address{Departamento de Matem\'aticas, Universidad T\'ecnica
	Fe\-de\-ri\-co San\-ta Ma\-r\'\i a, Valpara\'\i
	so, Chile}  \email{carolina.reyr@usm.cl}

\author[N. Saintier]{Nicolas Saintier}
\address{Departamento de Matem\'atica, Facultad de Ciencias Exactas y Naturales, Universidad de Buenos Aires,
Argentina}  \email{nsaintie@dm.uba.ar}

\thanks{
	\mbox{\hspace{11pt}}The first author was supported by Fondecyt Project 3200422.}

\begin{abstract} 
	This paper deals with fractional Sobolev spaces  on  a compact Riemannian manifold. 
    We prove a Sobolev inequality in the critical range with an optimal 
	constant for these fractional Sobolev spaces. We use this result to study the existence of 
	a non-trivial solution for equations driven by a non-local integro-differential operator $\La_{\Ka}$ 
with critical non-linearity. 
\end{abstract}

\maketitle

\section{Introduction}
 
Equations involving non-local operators on an open subset of $\R^n$ are the subject of an intense research activity which focuses on the classical questions of existence, uniqueness, regularity, and qualitative properties of 
the solutions, for both linear and non-linear operators, like the fractional  and the p-fractional Laplacian (see \cite{caffa, KKP, servadei2013yamabe, silvestre}). This acute effort to understand this type of equation is due to its multiple applications in several contexts: continuum mechanics, phase transition, population dynamics, optimal control, game theory and image processing, as is explained in \cite{caffarelli2012non,gilboa2009nonlocal} and references therein.

 In this paper, we are particularly interested in equations on Riemannian manifolds involving a non-linearity whose growth is critical from the point of view of the Sobolev embedding. The first results in this direction started in the '60s in the context of the Yamabe problem, a classical problem in differential geometry formulated as the question of finding a non-trivial solution to a particular critical equation with the Laplace-Beltrami operator. The problem was solved completely three decades ago, and since then,  many authors have devoted their work to extend the techniques used for its resolution in different directions  (e.g. \cite{brezis1983positive,djadli2000paneitz}). 
 Recently an analogous theory has been developed for the so-called fractional Yamabe problem, 
which relies on finding a solution to a critical equation with a specific fractional operator on manifolds (\cite{chang2011fractional,gonzalez2013fractional,kim2017existence}). 

To the best of our knowledge, there is still no work dealing with critical equations on manifolds for general linear and non-linear integro-differential operators. 
The present work aims to provide a functional framework suitable to extend to the p-fractional Laplacian on a compact Riemannian manifold $(M,g)$ many results obtained for the standard Laplacian.
More precisely, we start defining natural fractional  Sobolev spaces $W^{s,p}(M)$, and then we prove an optimal Sobolev inequality. Moreover, we deduce from there an existence result for critical equations involving a singular non-local operator.

The question of  suitable natural fractional Sobolev spaces on manifolds is not trivial. 
Indeed we found two different definitions of fractional Sobolev spaces in the literature, 
both equally natural and appealing but slightly different.
H. Triebel introduced in \cite{T2, T} the whole scales of Besov $B^s_{p,q}(M)$ and Triebel–Lizorkin spaces $F^s_{p,q}(M)$ on manifolds. 
The Triebel-Lizorkin spaces on manifolds are modelled on the classical Triebel-Lizorkin spaces on $\R^n$ 
through exponential charts and partition of unity. On the other hand, the Besov spaces on manifolds are defined by interpolation of the Triebel-Lizorki spaces, as in $\R^n$.
Both spaces have attracted considerable attention in the last decades. 
For example in \cite{TPV}, the authors study the theory of Besov and 
Triebel–Lizorkin spaces on general non-compact Lie groups endowed with a sub-Riemannian structure.
Furthermore, it was recently proved in \cite{BBM, BBM2002} that there exists a Brezis-Bourgain-Mironescu type result for these spaces: 
$$ \lim_{s\to 1} (1-s)[u]_{s,p}^p= C \|\nabla u\|_p^p. $$ 

In the Euclidean setting, the fractional Sobolev spaces are defined for $s\in (0,1),\,sp<n$, as 
\begin{equation}\label{DefFracSobRn} 
W^{s,p}(\R^n) := \{ u\in L^p(\R^n),\, [u]^p_{s,p} <\infty\},
\end{equation}  
endowed with the norm $\|u\|_{s,p}^p=\|u\|_p^p+[u]_{s,p}^p$, where 
\begin{equation}\label{DefSemiNormRn}
[u]_{s,p}^p:= \iint_{\R^n\times \R^n} \frac{|u(x)-u(y)|^p}{|x-y|^{n+sp}}\,dxdy,
\end{equation} 
 known as the Galgliardo seminorm.
Basic properties of $W^{s,p}(\R^n)$ can be found in \cite{NPV}, 
whereas the complete theory of Besov and Triebel-Lizorkin spaces on $\R^n$ can be found in \cite{triebeltheoryof}. In particular, it holds 
$$
F^s_{p,p}(\R^n)=B^s_{p,p}(\R^n)=W^{s,p}(\R^n).
$$
H. Triebel extended part of this theory to $B^s_{p,q}(M)$ and $F^s_{p,q}(M)$ in \cite{T2, T}. 
Furthermore, he proved that when $p=q$, we have $F^s_{p,p}(M)=B^s_{p,p}(M)$, which are the spaces we are interested in, and we call $W^{s,p}(M)$. 
More recently, L. Guo, B. Zhang, and Y. Zhang \cite{GZZ} straightforwardly adapted \eqref{DefFracSobRn}-\eqref{DefSemiNormRn} on a compact Riemannian manifold 
$(M,g)$ in the following way. For $s\in (0,1),\,sp<n$, they define
\begin{equation}\label{DefFracSob} 
	\widetilde{W}^{s,p}(M) := \{ u\in L^p(\R^n),\, [u]^p_{s,p} <\infty\},
\end{equation}  
 where 
\begin{equation}\label{DefSemiNorm} 
	[u]_{s,p}^p:= \iint_{M\times M} \frac{|u(x)-u(y)|^p}{d_g(x,y)^{n+sp}}\,dv_g(x)dv_g(y).
\end{equation} 
 
Both fractional spaces $W^{s,p}(M)$ and $\widetilde{W}^{s,p}(M)$ are pretty natural and satisfy the usual properties of Sobolev spaces 
(such as Banach, reflexivity, the density of smooth functions and embedding Theorems). 
So, it is logical to investigate if they define different spaces or not. 
Our first main result shows that they indeed coincide: 

\begin{thm}\label{FractSpaces} Let $(M,g)$ be a compact Riemannian manifold without boundary of dimension $n$, $s\in (0,1)$ and $sp<n$. 
Then the fractional Sobolev spaces $W^{s,p}(M)$  and $\widetilde{W}^{s,p}(M)$ introduced by H. Triebel and  
L. Guo, B. Zhang, and Y. Zhang coincide with norm equivalence. 
\end{thm}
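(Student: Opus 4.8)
The plan is to reduce the identification to the local statement $W^{s,p}(\R^n)=F^s_{p,p}(\R^n)$ (with the Gagliardo norm \eqref{DefSemiNormRn}), recalled above, by means of a finite atlas and a subordinate partition of unity, and then to compare the intrinsic Gagliardo seminorm \eqref{DefSemiNorm} with the Euclidean Gagliardo seminorms of the localized pieces. Concretely, fix a finite atlas $\{(\Omega_i,\varphi_i)\}_{i=1}^N$ of $M$ and a partition of unity $\{\eta_i\}$ subordinate to it, with $\eta_i\in C^\infty(M)$, $\sum_i\eta_i\equiv1$ and $\mathrm{supp}\,\eta_i$ compact in $\Omega_i$. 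By the (chart- and resolution-independent) definition of Triebel's space, $\|u\|_{W^{s,p}(M)}^p$ is equivalent to $\sum_i\|v_i\|_{s,p}^p$, where $v_i:=(\eta_i u)\circ\varphi_i^{-1}$ is supported in the relatively compact set $K_i:=\varphi_i(\mathrm{supp}\,\eta_i)\subset U_i:=\varphi_i(\Omega_i)$. Since the metric $g$, written in each chart, has coefficients bounded above and below, the Riemannian volume is comparable to Lebesgue measure, and the geodesic distance of two nearby points is comparable, with constants depending only on the atlas, to the Euclidean distance of their coordinates; in particular $\|u\|_{L^p(M)}^p\approx\sum_i\|v_i\|_{L^p(\R^n)}^p$. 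It therefore suffices to prove the two-sided bound $[u]_{s,p}^p\approx\sum_i[v_i]_{s,p}^p$ up to a controlled multiple of $\|u\|_{L^p(M)}^p$.

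In both directions one splits the double integral defining a Gagliardo seminorm into the region where the two points are at distance $\ge\delta$ and the region where they are at distance $<\delta$, for a small $\delta>0$. On the first region the kernel is bounded, so its contribution is $\lesssim\|u\|_{L^p(M)}^p$, respectively $\lesssim\|v_i\|_{L^p(\R^n)}^p$ (for the compactly supported $v_i$ one also uses $\int_{|z|\ge\delta}|z|^{-n-sp}\,dz<\infty$ to absorb the pairs with exactly one point near $K_i$). If $\delta$ is smaller than the Lebesgue number of the cover and than the injectivity radius, then $d_g(x,y)<\delta$ forces $x,y$ to lie in a common $\Omega_i$, and one may further arrange that whenever $x$ or $y$ lies in $\mathrm{supp}\,\eta_i$ both of them lie in a fixed $\Omega_i'\Subset\Omega_i$; on such regions one transfers the integral through $\varphi_i$, using $d_g(x,y)\simeq|\varphi_i(x)-\varphi_i(y)|$ and $dv_g\simeq dx$. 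For $W^{s,p}(M)\hookrightarrow\widetilde W^{s,p}(M)$ one writes $u(x)-u(y)=\sum_i\big((\eta_iu)(x)-(\eta_iu)(y)\big)$, applies the discrete power-mean inequality (here $p\ge1$) to bring the $p$-th power inside the finite sum, and bounds each resulting term on the near-diagonal region by $C[v_i]_{s,p}^p$. For the reverse inclusion one estimates each $[v_i]_{s,p}^p$ by transferring to $M$ and writing $(\eta_iu)(x)-(\eta_iu)(y)=\eta_i(x)\big(u(x)-u(y)\big)+u(y)\big(\eta_i(x)-\eta_i(y)\big)$: the first summand contributes $\lesssim[u]_{s,p}^p$, and for the second one uses $|\eta_i(x)-\eta_i(y)|\le\mathrm{Lip}(\eta_i)\,d_g(x,y)$ together with $\int_{d_g(x,y)<\delta}d_g(x,y)^{p-n-sp}\,dv_g(x)<\infty$ — finite precisely because $p-n-sp>-n$, i.e. $(1-s)p>0$ — to get $\lesssim\|u\|_{L^p(M)}^p$. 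Summing over $i$ and combining with the reduction above yields $\|u\|_{W^{s,p}(M)}\approx\|u\|_{\widetilde W^{s,p}(M)}$.

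The only genuinely delicate point is the uniform comparison, near the diagonal, of the \emph{global} geodesic distance $d_g$ with the Euclidean distance of local coordinates, together with making the Lebesgue-number localization compatible with the supports of the $\eta_i$; once $\delta$ is fixed as above this is a compactness argument with constants depending only on the finite atlas, and the remaining estimates are then routine. If preferred, all of the above may first be carried out for $u\in C^\infty(M)$ and extended to general $u$ by the density of smooth functions in both spaces.
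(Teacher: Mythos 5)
Your argument is correct and follows essentially the same route as the paper. The paper organizes the proof in two stages (Proposition~\ref{Prop.Equiv} compares $\|u\|_{s,p}$ with $\sum_i\|\eta_i u\|_{s,p}$ on $M$, then the proof of Theorem~\ref{FractSpaces} compares $\|\eta_i u\|_{s,p}$ with $\|v_i\|_{s,p}$ in $\R^n$), whereas you telescope these into a single comparison $[u]_{s,p}^p\approx\sum_i[v_i]_{s,p}^p$; but the ingredients are identical: Jensen / power-mean for the forward inclusion, the Leibniz-type splitting $(\eta_iu)(x)-(\eta_iu)(y)=\eta_i(x)(u(x)-u(y))+u(y)(\eta_i(x)-\eta_i(y))$ with the Lipschitz bound and integrability $(1-s)p>0$ for the reverse, the near-/far-diagonal decomposition (the paper uses the compact inclusion $\mathrm{supp}\,\eta_i\Subset B_\delta(x_i)$ in place of your Lebesgue-number remark, which amounts to the same choice of $\delta$), the comparability of $d_g$ with the Euclidean coordinate distance and of $dv_g$ with Lebesgue measure (Lemma~\ref{partitions}), and finally Triebel's norm equivalence \eqref{NormEquiv} together with $\widetilde W^{s,p}(\R^n)=F^s_{p,p}(\R^n)$. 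One detail worth making explicit if you write this up: the compatibility between $\delta$ and the supports of the $\eta_i$ should be quantified by choosing $\delta<\min_i\operatorname{dist}_g(\operatorname{supp}\eta_i,\,M\setminus\Omega_i)$, as the paper does via the constant $\alpha$.
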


From now on, we fix a compact Riemannian manifold boundaryless of dimension $n$ and denote
$W^{s,p}(M)=\widetilde{W}^{s,p}(M)$. 
As a consequence of H. Triebel and L. Guo, B. Zhang, and Y. Zhang's work, we know in particular that 
the embedding  $W^{s,p}(M)\hookrightarrow L^q(M)$ is continuous for $q\le p^*$ and compact for $q < p^*$, 
 where $p^*$ is the fractional critical Sobolev exponent given by 
 \begin{equation} 
 p^* = p^*(n,s)=\frac{np}{n-sp}.   
 \end{equation} 

The p-fractional Laplacian $(-\Delta_p)^s:W^{s,p}(M)\to (W^{s,p}(M))'$ appears naturally when looking for critical points of the semi-norm $[u]_{s,p}^p$. So we consider the equation
\begin{equation}\label{MainEqu100}
(-\Delta_p)^su + h|u|^{p-2}u = f|u|^{q-2}u
\end{equation}
where $f$ and $h$ are smooth functions on $M$, with $q\le p^*$.  
Solutions to \eqref{MainEqu100} can be found as critical points of the functional 
$$ J(u) := \frac{1}{p} [u]_{s,p}^p + \frac{1}{p} \int_M h|u|^p\,dv_g - \frac{1}{q}\int_M f|u|^q\,dv_g, 
\qquad u\in W^{s,p}(M). $$ 
Actually, we shall consider a more general non-local operator $\La_{\Ka}$ on $W^{s,p}(M)$ defined as follows.
Denote $\Di=\{(x,x):x\in M\}$. Let $g_\eps$ be the metric on $\R^n$ obtained blowing-up $g$ at $x_0$, namely 
$g_\eps(x) = ((\exp_{x_0})^*g)(\eps x)$.
Let $\Ka( \cdot, \cdot \ ;g):(M\times M)\backslash \Di\rightarrow(0,\ +\infty)$ be a function which satisfies:

\begin{enumerate}[(K1)]
	
	\item\label{K1} $m\Ka\in L^{1}(M\times M)$ , where $m=m(x,y)=\displaystyle \min\{d_g(x,y)^{p},1\};$ 
	
	\item \label{K2} $\Ka(x, y; g)=\Ka(y, x ;g)$ for any $(x,y)\in (M\times M)\backslash \Di$.
	
	\item \label{K3} There is a constant $\Lambda>1$ such that 
	$$
	\Lambda^{-1} < \Ka(x,y;g) d_g(x,y)^{n+ps}  < \Lambda \, \text{ for all } x,y \in (M\times M)\backslash \Di.
	$$
	\item \label{K4} Let $x_0\in M$ and $G:T_{x_0}M\to M$ be a smooth function. If we denote
	$$ 
	\tilde \Ka(X,Y,G^* g):=\Ka(G(X),G(Y);g)\qquad \text{for all } X \neq Y\in T_{x_0}M,
	$$ 
	then it holds
	$$
	\eps^{n+sp}\tilde \Ka(X,Y;\eps^2 g_\eps)\to |X-Y|^{-(n+ps)}
	$$ 
	as $\eps\to 0$ uniformly on compacts.
\end{enumerate}

Then, $\La_{\Ka}$ is defined weakly by 
\begin{equation}\label{LK}
	(\displaystyle \La_{\Ka}u,v)= 
	\iint_{M\times M} |u(x)-u(y)|^{p-2}(u(x)-u(y))(v(x)-v(y))\Ka(x, y; g)
	\,dv_g(x)dv_g(y).
\end{equation}
We thus look for a non-trivial solution to a non-local equation like
\begin{equation}\label{MainEqu200}
\La_{\Ka}u + h|u|^{p-2}u = f|u|^{q-2}u,
\end{equation}
distinguishing two cases: the critical and sub-critical problems. In the sub-critical case $q<p^*$, an application of the Mountain Pass Theorem gives us the following result.

\begin{thm}\label{subcritical}
 Let $s\in (0,1),\,sp<n$, and $f\ge 0$ and $h$ be smooth functions on $M$. Assuming the coercivity condition 
\begin{equation}\label{coercivity}
	 [u]_{s,p}^p +  \int_M h|u|^p\,dv_g \ge C \|u\|_{s,p}^p \qquad  \text{for any }u\in W^{s,p}(M),
\end{equation}
the equation \eqref{MainEqu200} with $1<q<p^*$ has a non-trivial solution. 
\end{thm}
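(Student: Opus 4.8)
The plan is to obtain a non-trivial solution of \eqref{MainEqu200} as a critical point $u\neq0$ of the energy functional
\[
J(u) = \frac1p\,\|u\|_\Ka^p + \frac1p\int_M h|u|^p\,dv_g - \frac1q\int_M f|u|^q\,dv_g,\qquad u\in X:=W^{s,p}(M),
\]
where $\|u\|_\Ka^p:=\iint_{M\times M}|u(x)-u(y)|^p\,\Ka(x,y;g)\,dv_g(x)\,dv_g(y)$, a quantity which is finite on $X$ by (K3) and the compactness of $M$. A routine first step is to check that $J\in C^1(X;\R)$ on the reflexive Banach space $X$, with $J'(u)v=(\La_\Ka u,v)+\int_M h|u|^{p-2}uv\,dv_g-\int_M f|u|^{q-2}uv\,dv_g$, so that its critical points are exactly the weak solutions of \eqref{MainEqu200}; the continuous embedding $W^{s,p}(M)\hookrightarrow L^q(M)$, $q\le p^*$, takes care of the two lower-order terms. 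By (K3) one has $\Lambda^{-1}[u]_{s,p}^p\le\|u\|_\Ka^p\le\Lambda[u]_{s,p}^p$, which combined with \eqref{coercivity} yields a constant $c>0$ with
\[
\|u\|_\Ka^p+\int_M h|u|^p\,dv_g\ \ge\ c\,\|u\|_{s,p}^p\qquad\text{for all }u\in X;
\]
this a priori estimate underlies all of the compactness below. From here I would distinguish the cases $p<q<p^*$ and $1<q\le p$.

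In the case $p<q<p^*$ I would apply the Mountain Pass Theorem to $J$. The geometry is immediate: $J(0)=0$; for $\|u\|_{s,p}=\rho$ small, the estimate above together with the Sobolev inequality $\|u\|_q\le C\|u\|_{s,p}$ gives $J(u)\ge\rho^p\bigl(\tfrac cp-C'\rho^{q-p}\bigr)\ge\alpha>0$ since $q>p$; and, since $f\ge0$ is not identically zero, fixing $v$ with $\int_M f|v|^q\,dv_g>0$ one gets $J(tv)=\tfrac{t^p}{p}\bigl(\|v\|_\Ka^p+\int_M h|v|^p\bigr)-\tfrac{t^q}{q}\int_M f|v|^q\to-\infty$ as $t\to+\infty$, so $e:=t_0v$ with $t_0$ large lies past the mountain. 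For the Palais--Smale condition, take $(u_k)$ with $J(u_k)\to c$ and $J'(u_k)\to0$. Boundedness follows from
\[
J(u_k)-\tfrac1q J'(u_k)u_k=\Bigl(\tfrac1p-\tfrac1q\Bigr)\Bigl(\|u_k\|_\Ka^p+\int_M h|u_k|^p\,dv_g\Bigr)\ge\Bigl(\tfrac1p-\tfrac1q\Bigr)c\,\|u_k\|_{s,p}^p,
\]
with $\tfrac1p-\tfrac1q>0$, while the left-hand side is bounded by a constant plus $o(1)\|u_k\|_{s,p}$. Extracting $u_k\rightharpoonup u$ in $X$, the compact embedding $W^{s,p}(M)\hookrightarrow L^q(M)$ (valid since $q<p^*$) gives $u_k\to u$ in $L^p(M)\cap L^q(M)$ and a.e., whence $\int_M h|u_k|^{p-2}u_k(u_k-u)\,dv_g\to0$ and similarly the $f$-term, so $(\La_\Ka u_k,u_k-u)\to0$; since also $(\La_\Ka u,u_k-u)\to0$ by weak convergence, subtracting and using the $(S_+)$ property of $\La_\Ka$ forces $u_k\to u$ in $X$. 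The Mountain Pass Theorem then furnishes a critical point at level $c\ge\alpha>0=J(0)$, which is therefore non-trivial and solves \eqref{MainEqu200}.

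In the case $1<q\le p$ the mountain-pass geometry at the origin is lost, so I would minimise $J$ directly. Suppose first $q<p$: by the estimate above $J$ is coercive on $X$, and it is sequentially weakly lower semicontinuous --- $u\mapsto\|u\|_\Ka^p$ is convex and continuous, hence weakly l.s.c., while $u\mapsto\int_M h|u|^p\,dv_g$ and $u\mapsto\int_M f|u|^q\,dv_g$ are weakly continuous through the compact embeddings of $X$ into $L^p(M)$ and $L^q(M)$ --- so $J$ attains its infimum at some $u_0\in X$, a weak solution of \eqref{MainEqu200}. That $u_0\neq0$ follows by choosing $v$ with $\int_M f|v|^q\,dv_g>0$: since $q<p$, the term $-\tfrac{t^q}{q}\int_M f|v|^q$ dominates $\tfrac{t^p}{p}\bigl(\|v\|_\Ka^p+\int_M h|v|^p\bigr)$ for $t>0$ small, so $\inf_X J\le J(tv)<0=J(0)$. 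The borderline case $q=p$, where $J$ is $p$-homogeneous, is of eigenvalue type and is treated via an associated constrained minimisation using the same compactness.

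The crux is the strong-convergence step of the Palais--Smale condition (case $p<q<p^*$), and this is where (K3) is essential: it lets one bound $(\La_\Ka u_k-\La_\Ka u,u_k-u)$ below by a positive multiple of $[u_k-u]_{s,p}^p$ once one invokes the elementary convexity inequalities for $t\mapsto|t|^{p-2}t$ --- namely $(|a|^{p-2}a-|b|^{p-2}b)(a-b)\ge\kappa\,|a-b|^p$ for $p\ge2$, and the corresponding H\"older-type bound for $1<p<2$ --- which must be handled separately. The symmetry (K2) and the integrability (K1) are what make the double integral defining $\La_\Ka$ and $J$ meaningful in the first place, whereas the blow-up normalisation (K4) plays no role here and is reserved for the critical problem $q=p^*$.
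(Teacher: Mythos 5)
Your proof follows the same overall architecture as the paper's: Mountain Pass for $p<q<p^*$, direct minimisation of the energy for $q\le p$, with the Palais--Smale and compactness arguments running through the same coercivity bound, the compact embedding $W^{s,p}(M)\hookrightarrow L^q(M)$, and the monotonicity of $|t|^{p-2}t$. Two places where you do more than the paper's own argument: you verify that the minimiser in the $q<p$ case is non-trivial by exhibiting a test direction with $\inf J\le J(tv)<0=J(0)$ for $t$ small --- the paper stops after asserting that a minimum exists, without ruling out $u_0\equiv 0$ --- and you correctly note that the elementary inequality $(|a|^{p-2}a-|b|^{p-2}b)(a-b)\ge C|a-b|^p$ holds only for $p\ge2$ and must be replaced by its H\"older-type variant for $1<p<2$, whereas the paper invokes it for all $p>1$. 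On the other hand, you leave the borderline $q=p$ to a ``constrained eigenvalue minimisation'' that you do not carry out; the paper lumps $q=p$ into its $q\le p$ argument, but there the coercivity of $I$ and the claim $I(tv_0)\to+\infty$ can both fail when $\int_M f|v_0|^p\,dv_g$ is large relative to $J_\Ka(v_0)$, so that case is genuinely incomplete in both treatments. Finally, your weak lower semicontinuity argument (convexity of $u\mapsto\|u\|_\Ka^p$ plus weak continuity of the compactly embedded lower-order terms) is cleaner than the paper's appeal to a convexity-type inequality $I(u_i)>I(u)+\langle I'(u),u_i-u\rangle$, which does not follow merely from $I\in C^1$.
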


Let us denote by $K(n,s,2)$ the best constant in the classical embedding $ W^{s,2}(\R^n)\subset L^{2^*}(\R^{n}) $.
For the critical case, we shall prove the existence of a solution to the problem
\begin{equation}\label{MainEquK}
\La_{\Ka}u + hu = f|u|^{p^*-2}u 
\end{equation}
where $h$ and $f$ are smooth functions on $M$, $p=2.$ 
This problem is related to the following fractional Sobolev inequality, which is the main result of this paper.

\begin{thm}\label{TeoBestConst} For any $\eps>0$ there exists $C_\eps>0$ such that 
	\begin{equation}\label{BestConstant} 
		\left( \int_M |u|^{2^*} \,dv_g\right)^\frac{2}{2^*} 
		\le (K(n,s,2)+\eps) \iint_{M\times M} |u(x)-u(y)|^2\Ka(x,y;g)\,dv_g(x)dv_g(y)
		+ C_\eps \int_M u^2\,dv_g 
	\end{equation} 
	holds for any $u\in W^{s,2}(M)$. Moreover, $K(n,s,2)$ is the least possible constant. 
\end{thm}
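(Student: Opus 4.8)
The plan is to prove the inequality \eqref{BestConstant} by a covering/partition of unity argument that transfers the sharp Euclidean fractional Sobolev inequality to the manifold, paying attention to the fact that the bilinear form is built from the kernel $\Ka$ rather than the plain Gagliardo kernel. Fix $\eps>0$. By the definition of $K(n,s,2)$ and a standard scaling argument, for every $\delta>0$ there is a radius $\rho>0$ such that, for any ball $B\subset M$ of radius $\rho$ and any $u\in W^{s,2}(M)$ supported in $B$, one has, after straightening $g$ with normal coordinates and using that the chart is a quasi-isometry with constant close to $1$,
\[
\left(\int_M |u|^{2^*}\,dv_g\right)^{2/2^*}
\le (K(n,s,2)+\delta)\,[u]_{s,2}^2 .
\]
Here one uses condition (K3) only to know that $[u]_{s,2}^2$ (the Gagliardo seminorm on $M$) and the $\Ka$-energy are comparable; the sharp constant, however, must come from the genuine Gagliardo seminorm, so some care is needed and (K4) — which says the rescaled kernel converges to $|X-Y|^{-(n+ps)}$ — is what makes the $\Ka$-energy and the Euclidean seminorm agree \emph{to leading order} on small balls. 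I would therefore prove the localized estimate directly for the $\Ka$-energy: blow up at the center of the ball with parameter $\rho$, invoke (K4) to see that $\rho^{n+sp}\tilde\Ka(X,Y;\rho^2 g_\rho)\to |X-Y|^{-(n+ps)}$ uniformly on the (compact) support, and conclude that the $\Ka$-energy of $u$ differs from the sharp Euclidean Dirichlet form of the pulled-back function by a factor $1+o_\rho(1)$.

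Next I would globalize. Choose a finite cover of $M$ by balls $(B_i)_{i=1}^N$ of radius $\rho$ as above, with a subordinate smooth partition of unity $(\eta_i)$, $\sum_i \eta_i^2 = 1$ (taking squares of a partition of unity, or equivalently $\eta_i = \sqrt{\zeta_i}$ for an ordinary partition $\zeta_i$, so that the localization identity is exact for the $L^2$-based energy). Writing $u = \sum_i \eta_i u$ one has, for the left side,
\[
\left(\int_M |u|^{2^*}\,dv_g\right)^{2/2^*}
\le \sum_i \left(\int_M |\eta_i u|^{2^*}\,dv_g\right)^{2/2^*} + \text{(lower order)},
\]
by the elementary inequality $\|\,\sum_i v_i\,\|_{2^*}^2 \le \big(\sum_i \|v_i\|_{2^*}\big)^2$ together with an interpolation trick to absorb the cross terms into $\eps$-small multiples of the energy plus a large multiple of $\|u\|_2^2$. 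Applying the localized estimate to each $\eta_i u$ and summing yields the main term $(K(n,s,2)+\delta)\sum_i \mathcal E_{\Ka}(\eta_i u)$. The key algebraic point is then the localization identity for the quadratic $\Ka$-form:
\[
\sum_i \iint |\eta_i u(x)-\eta_i u(y)|^2 \Ka(x,y;g)\,dv_g\,dv_g
= \iint |u(x)-u(y)|^2 \Ka(x,y;g)\,dv_g\,dv_g + R(u),
\]
where, using $\sum_i \eta_i^2 = 1$ and expanding $\eta_i u(x)-\eta_i u(y) = \eta_i(x)(u(x)-u(y)) + u(y)(\eta_i(x)-\eta_i(y))$, the remainder $R(u)$ involves factors $(\eta_i(x)-\eta_i(y))$ that are controlled by $|x-y|\,\|\nabla\eta_i\|_\infty$ and hence, against the kernel bound (K3), are integrable and estimated by $C\|u\|_2^2$ (splitting $d_g(x,y)\le 1$ vs $>1$; for the near-diagonal part one uses $|x-y|^2 d_g(x,y)^{-(n+ps)}$ which is integrable since $ps<2$, i.e. $p=2$, $s<1$). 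Choosing $\delta$ small enough in terms of $\eps$ and collecting constants gives \eqref{BestConstant} with constant $K(n,s,2)+\eps$.

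Finally, optimality: I would show $K(n,s,2)$ cannot be lowered by testing \eqref{BestConstant} against suitably rescaled and truncated extremals of the Euclidean inequality. Fix $x_0\in M$, let $U$ be an extremal (or near-extremal) for $W^{s,2}(\R^n)\subset L^{2^*}$, set $u_\eps(x) = \chi(x)\,\eps^{-(n-2s)/2} U(\exp_{x_0}^{-1}(x)/\eps)$ with $\chi$ a fixed cutoff near $x_0$, and compute asymptotics as $\eps\to0$: $\|u_\eps\|_{2^*}^2 \to \|U\|_{2^*}^2$, $\int u_\eps^2 \to 0$ (since $n>2s$ makes this term of order $\eps^{2s}$), and — crucially — the $\Ka$-energy of $u_\eps$ converges to $[U]_{s,2}^2$ by (K4), because the blow-up turns $\Ka$ into the Euclidean kernel in the limit. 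Plugging into \eqref{BestConstant} and letting $\eps\to0$ forces the leading constant to be $\ge \|U\|_{2^*}^2/[U]_{s,2}^2$, whose supremum over $U$ is exactly $K(n,s,2)$. I expect the main obstacle to be the localized estimate in the first paragraph: making rigorous, with explicit control of all error terms, the claim that on a small ball the $\Ka$-energy reproduces the \emph{sharp} Euclidean constant — this requires combining (K3) and (K4) carefully (K4 gives the leading-order identification, K3 keeps everything finite) and handling the discrepancy between the Riemannian volume/distance and their Euclidean counterparts in normal coordinates, which contributes factors $1+O(\rho^2)$ that must be shown not to spoil the sharp constant.
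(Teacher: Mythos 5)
Your optimality argument --- testing against the blown-up and truncated Euclidean extremal $U$, using (K4) to recover the Euclidean Gagliardo energy in the limit, and estimating $\|u_\eps\|_2^2 = O(\eps^{2s})$, $\|u_\eps\|_{2^*}^{2^*}\to\|U\|_{2^*}^{2^*}$ --- is essentially the paper's Step 1, carried out in the same way with the same role for (K4).

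For the forward inequality you take a genuinely different route. The paper's Step 2 covers $M$ by small balls with an \emph{ordinary} partition of unity $\sum_i\eta_i=1$, passes to $\|u\|_{2^*}^2$ via Jensen's inequality applied to $\|u\|_{2^*}\le\sum_i\|\eta_iu\|_{2^*}$, pulls each $\eta_iu$ back to $\R^n$, applies the sharp Euclidean inequality, splits the Euclidean seminorm of the pull-back into near/far pieces, and returns to the $\Ka$-energy of $u$ (weighted by $\eta_i$) via the two-sided bound (K3), then sums using $\sum_i\eta_i=1$. You instead take $\sum_i\eta_i^2=1$, so that $\|u\|_{2^*}^2=\||u|^2\|_{2^*/2}\le\sum_i\|\eta_iu\|_{2^*}^2$ by Minkowski with no $N$-dependent loss, you estimate each $\|\eta_iu\|_{2^*}^2$ by the $\Ka$-energy of $\eta_iu$ itself, and you recombine via the localization identity $\sum_i\iint|\eta_iu(x)-\eta_iu(y)|^2\Ka\,dv_gdv_g=\iint|u(x)-u(y)|^2\Ka\,dv_gdv_g + R(u)$ with $R(u)\lesssim\|u\|_2^2$ (the cutoff differences contribute $d_g(x,y)^2$ against the kernel, and $2>2s$, so the remainder integrates). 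This is a cleaner decomposition: the paper's chain, read literally, retains the factor $\Lambda>1$ from (K3) and a constant from Jensen's inequality that are not absorbed into $\eps$, whereas the squared partition of unity together with your localization identity is precisely what is needed to reach the sharp constant $K(n,s,2)+\eps$.

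The genuine gap, which you correctly flag, is the localized sharp estimate. You need the $\Ka$-energy of a function supported near $x_i$ to reproduce the Euclidean Gagliardo energy of its pull-back up to a factor $1+o(1)$ \emph{uniformly in the function}. Condition (K4) gives convergence of the rescaled kernel to $|X-Y|^{-(n+2s)}$ uniformly on compact sets \emph{away from the diagonal}; in the optimality step this suffices because the profile $U$ is fixed and one can combine it with (K3) and dominated convergence. But a localized inequality holding uniformly in $u$ requires the \emph{near-diagonal} comparison to be close to $1$, and this is exactly where (K4) is silent and (K3) only gives $\Lambda$. What your argument really needs is the stronger statement that $d_g(x,y)^{n+2s}\Ka(x,y;g)\to1$ as $d_g(x,y)\to0$, uniformly in $(x,y)\in M\times M$. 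This should be extractable from (K4) together with compactness of $M$ and continuity of $\Ka$ off the diagonal (blow up at a contradicting minimizing sequence centred at the midpoint rather than at a fixed $x_0$), but it does not follow from a direct invocation of (K4), and it deserves to be isolated as its own lemma before the partition-of-unity argument can be run.
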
 
In this sense, the present work may be seen as the extension of some classical results
for the Laplacian to the case of non-local fractional operators.
Unfortunately, in the non-local and non-linear case $p\neq 2$, there is no
information about the asymptotic behaviour at infinity of optimizers of the Sobolev
inequality in $\R^n$, and we could not prove this Theorem for $p\neq 2$. 
Recall that, for $p = 2$, the extremals are of the explicit form $cU (\frac{|x-x_0|}{\eps^2})$
with
\begin{equation}\label{bubble}
	U(x) = (1+|x|^2)^{-\frac{n-2s}{2}},
\end{equation}
see \cite{chen2006classification}.
Although it has been conjectured that this extremal has a similar
explicit form for the general case,  it is still an open problem.

Consider the functional 
$J_\Ka:W^{s,2}(M)\to \R$ defined by 
\begin{equation}\label{defJK}
	J_\Ka(u)=\frac{1}{2}\iint_{M\times M}|u(x)-u(y)|^{2}\Ka(x,y;g)dv_g (x)dv_g(y)+\frac{1}{2}\int_{M}h|u(x)|^{2}dv_g(x) 
\end{equation}
that we will minimize over 
\begin{equation}\label{defH}
	 H = \{ u\in W^{s,2}(M):\, \int_M f|u|^{2^*}\,dv_g=1\}.
\end{equation}
The solutions to \eqref{MainEquK}, with $p=2$, can be found as the critical points of $J_\Ka$ restricted to $H$.
As a corollary of the main result, we have the following Theorem: the non-local counterpart of a result well-known in the local setting.  

\begin{thm}\label{ExistsSolution}  Let $f\ge 0$ and $h$ be smooth functions on $M$. Assume the coercivity condition
	\begin{equation}\label{coerc2}
	 J_{\Ka}(u)\ge C\Vert u\Vert_{s,2}^2 \qquad \text{for any }u\in W^{s,2}(M) 
	\end{equation}
	for some positive constant $C$.  If
	\begin{equation}\label{Cond}
	\inf_H J_\Ka < \left(2\left(\max\,f\right)^{2/2^*}K(n,s,2)\right)^{-1},
	\end{equation}
	then the infimum in the l.h.s. of \eqref{Cond} is attained at some nonzero $u_0\in H$. 
	In particular, $u_0$ is a non-trivial solution to \eqref{MainEquK}. 
\end{thm}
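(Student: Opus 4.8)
The plan is to run the classical constrained-minimization / concentration-compactness scheme (in the spirit of Brezis--Nirenberg), using Theorem~\ref{TeoBestConst} as the sharp device controlling the possible loss of compactness in the critical exponent. First I would pick a minimizing sequence $(u_k)\subset H$, so that $J_\Ka(u_k)\to I:=\inf_H J_\Ka$. For $u\in H$ one has $1=\int_M f|u|^{2^*}\,dv_g\le(\max f)\|u\|_{2^*}^{2^*}\le C\|u\|_{s,2}^{2^*}$ by the Sobolev embedding, so $\|u\|_{s,2}$ is bounded below on $H$; together with the coercivity \eqref{coerc2} this gives $I>0$, and it also makes $(u_k)$ bounded in $W^{s,2}(M)$. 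Passing to a subsequence, $u_k\rightharpoonup u_0$ weakly in $W^{s,2}(M)$, $u_k\to u_0$ strongly in $L^2(M)$ and a.e.\ on $M$ (using the compact embedding $W^{s,2}(M)\hookrightarrow L^2(M)$), and I may also assume the $\Ka$-seminorm of $u_k-u_0$ and $\int_M f|u_k-u_0|^{2^*}\,dv_g$ converge.

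Next I would decompose $u_k=u_0+v_k$ with $v_k\rightharpoonup0$ in $W^{s,2}(M)$ and track each piece. With $Q(u):=\iint_{M\times M}|u(x)-u(y)|^2\Ka(x,y;g)\,dv_g(x)dv_g(y)$, bilinearity gives $Q(u_k)=Q(u_0)+Q(v_k)+2B(u_0,v_k)$ for the associated symmetric bilinear form $B$; by (K3), $v\mapsto B(u_0,v)$ is a bounded linear functional on $W^{s,2}(M)$, hence weakly continuous, so $B(u_0,v_k)\to0$. Since $h$ is bounded and $u_k\to u_0$ in $L^2(M)$, $\int_M h u_k^2\,dv_g\to\int_M h u_0^2\,dv_g$. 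Setting $\nu:=\lim_k\tfrac12 Q(v_k)\ge0$, these give $I=\lim_k J_\Ka(u_k)=J_\Ka(u_0)+\nu$. For the constraint, the Brezis--Lieb lemma (applicable since $(u_k)$ is bounded in $L^{2^*}$, converges a.e., and $f$ is bounded) yields $1=\int_M f|u_k|^{2^*}\,dv_g=\int_M f|u_0|^{2^*}\,dv_g+\int_M f|v_k|^{2^*}\,dv_g+o(1)$, so $\mu:=\lim_k\int_M f|v_k|^{2^*}\,dv_g$ exists, $0\le\mu\le1$, and $\int_M f|u_0|^{2^*}\,dv_g=1-\mu$.

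The heart of the argument is to exclude $\mu>0$. Applying Theorem~\ref{TeoBestConst} to $v_k$ and using $\|v_k\|_2\to0$, for each $\eps>0$ one gets $\|v_k\|_{2^*}^2\le(K(n,s,2)+\eps)Q(v_k)+o(1)$; letting $k\to\infty$ and then $\eps\to0$ gives $\limsup_k\|v_k\|_{2^*}^2\le2K(n,s,2)\nu$, hence $\mu\le(\max f)(2K(n,s,2)\nu)^{2^*/2}$, i.e.\ $\mu^{2/2^*}\le A\nu$ with $A:=2(\max f)^{2/2^*}K(n,s,2)$. On the other hand, if $\mu<1$ then $u_0/(1-\mu)^{1/2^*}\in H$, and the $2$-homogeneity of $J_\Ka$ gives $J_\Ka(u_0)=(1-\mu)^{2/2^*}J_\Ka\big(u_0/(1-\mu)^{1/2^*}\big)\ge(1-\mu)^{2/2^*}I$ (and trivially $J_\Ka(u_0)\ge0=(1-\mu)^{2/2^*}I$ when $\mu=1$, by \eqref{coerc2}). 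Combining, $I=J_\Ka(u_0)+\nu\ge(1-\mu)^{2/2^*}I+\mu^{2/2^*}/A$, i.e.\ $AI\big(1-(1-\mu)^{2/2^*}\big)\ge\mu^{2/2^*}$. Since $0<2/2^*<1$, the elementary bound $t^{2/2^*}\ge t$ on $[0,1]$ gives $(1-\mu)^{2/2^*}+\mu^{2/2^*}\ge1$, i.e.\ $1-(1-\mu)^{2/2^*}\le\mu^{2/2^*}$; so if $\mu>0$ we would get $\mu^{2/2^*}\le AI\mu^{2/2^*}$, contradicting $AI<1$, which is exactly the hypothesis \eqref{Cond}. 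Therefore $\mu=0$, so $\int_M f|u_0|^{2^*}\,dv_g=1$, giving $u_0\in H$ and in particular $u_0\neq0$; then $I\le J_\Ka(u_0)\le J_\Ka(u_0)+\nu=I$ forces $\nu=0$ and $J_\Ka(u_0)=I$, so the infimum is attained at $u_0$.

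Finally, since $u_0$ minimizes $J_\Ka$ on the $C^1$ constraint manifold $H=\{\int_M f|u|^{2^*}\,dv_g=1\}$, the Lagrange multiplier rule gives $\lambda\in\R$ with $\La_\Ka u_0+hu_0=\lambda f|u_0|^{2^*-2}u_0$ in the weak sense; testing with $u_0$ yields $\lambda=2J_\Ka(u_0)=2I>0$, and replacing $u_0$ by $\lambda^{1/(2^*-2)}u_0$ (using the $1$-homogeneity of $\La_\Ka$ for $p=2$) produces a non-trivial solution of \eqref{MainEquK}. I expect the genuinely delicate point to be the step ruling out concentration ($\mu=0$): this is exactly where the optimality of $K(n,s,2)$ in Theorem~\ref{TeoBestConst} and the strict gap \eqref{Cond} are both indispensable, and some care is needed to justify the Brezis--Lieb splitting of the constraint and the weak vanishing of the cross term $B(u_0,v_k)$ in the non-local framework.
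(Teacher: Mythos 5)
Your proof is correct, but it follows a genuinely different route from the paper's. You minimize directly on the critical constraint set $H$ and control the loss of compactness of a minimizing sequence via the weak splitting $u_k=u_0+v_k$: you expand the quadratic form $Q$ and kill the cross term by weak convergence and (K3), split the constraint with a weighted Brezis--Lieb lemma, and then use the sharp inequality of Theorem~\ref{TeoBestConst} applied to the defect $v_k$ (whose $L^2$ norm vanishes) together with the subadditivity of $t\mapsto t^{2/2^*}$ to show that the energy gap \eqref{Cond} forces $\mu=0$; this is the standard Brezis--Nirenberg / concentration-compactness scheme adapted to the non-local kernel $\Ka$, and all the steps check out, including the final Lagrange-multiplier rescaling $u_0\mapsto\lambda^{1/(2^*-2)}u_0$ enabled by the linearity of $\La_\Ka$ when $p=2$. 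The paper instead approximates the critical problem by subcritical ones: it takes minimizers $u_q\in H_q$ of $J_\Ka$ over the subcritical constraint sets $\int_M f|u|^q=1$ (whose existence is supplied by Theorem~\ref{subcritical}), proves $\mu_q\to\mu_0$ by H\"older and rescaling, extracts a weak limit $u_0$ of $u_q$ as $q\to2^*$, and invokes Theorem~\ref{TeoBestConst} on the $u_q$ themselves to bound $\|u_q\|_2$ away from zero and conclude $u_0\not\equiv0$, with the normalization $\int_M f|u_0|^{2^*}=1$ recovered from the Euler--Lagrange equation and weak lower semicontinuity. The trade-off is that the paper's route reuses the subcritical existence theorem and avoids Brezis--Lieb entirely, while yours is self-contained within the critical problem and makes more visible exactly where the strict inequality $AI<1$ and the sharpness of $K(n,s,2)$ enter (namely in ruling out $\mu>0$); both are standard and both are fine here.
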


The rest of this paper is organized as follows:
In Sect. 2, we set down some notation that we will use throughout the paper and we prove Theorem \ref{FractSpaces}, showing the equivalences between $\widetilde{W}^{s,p}(M)$ and the usual fractional spaces $B^s_{p,q}$ and $F^s_{p,q}$. 
In Sect. 3, we prove Theorem \ref{subcritical}, which gives us the existence of a solution for the sub-critical problem. 
In Sect. 4, we find the optimal Sobolev embedding given by Theorem \ref{TeoBestConst}. 
In Sect. 5, we apply the results of Sect. 4. to
 prove Theorem \ref{ExistsSolution}, which establishes the existence of a non-trivial solution to the problem \eqref{MainEquK}.
 In Sect. 6, we give some technical computations related to the function $U$ defined in \eqref{bubble}.

%
%


\section{Equivalence with the usual fractional spaces}

This section is devoted to defining the fractional Sobolev spaces on
Riemannian manifolds and proving some results related to the Triebel-Lizorkin and Besov spaces. 
For further details on the fractional Sobolev spaces in $\R^n$, we refer to \cite{NPV} and the
references therein.

\subsection{Preliminaries and notation}

Here we collect some
elementary results, which will be helpful in the main estimates of the paper.

Given $(M,\ g)$ a smooth Riemannian manifold and $\gamma$ : $[a,\ b]\rightarrow M$, a curve of class $C^{1}$, 
the length of $\gamma$ is
$$
L(\gamma)=\int_{\alpha}^{b}\sqrt{g(\gamma(t))
\left(\frac{d\gamma(t)}{dt},\frac{d\gamma(t)}{dt}\right)}dt.
$$
For $x, y\in M$ let $\mathcal{C}$ be the space of piecewise $C^1$ curves $\gamma$ : $[a,\ b]\rightarrow M$ such that $\gamma(a)=x$ and $\gamma(b)=y$. Then $d_{g}(x, y)=\inf_{\mathcal{C}}L(\gamma)$ is the distance associated with $g.$
We denote by $dv_{g}(x)= \sqrt{\det(g_{ij})} dx$ the Riemannian volume element on $(M,\ g)$, where the $g_{ij}$ are the components of the Riemannian metric $g$ in the chart and $dx$ is the Lebesgue volume element of $\mathbb{R}^{n}$.

For any $x \in M$ consider the exponential map $\exp_x : T_x M \to M$. Then, we can fix $r  > 0$ 
such that $\exp_x\big|_{B_{r}}: B_{r} \to B_r (x)$ is a diffeomorphism for any $x \in M$. 
Throughout the paper, we will denote by $B_R$ the ball in  $\R^n$ centred
at $0$ with radius $R$ and  by $B_{R}(x)$ the ball in $M$ centred at $x$ with radius $R$ for the
distance $d_g$. Also, we denote by $(\exp_x^*)g$ the metric in $\R^n$ defined as the pullback of $g$ via the exponential map.


We shall need the elementary (see \cite[Lemma 2.53]{lee2018introduction})
\begin{lemma}\label{partitions}
Given $\eps>0$, then there exist a $\delta>0$ smaller than the injectivity radius of $(M,g)$ and a covering of $M$ by balls $\{B_{\delta}(x_i), i=1,\dots,N\}$, such that  for any $i=1,\dots,N$, 
the following properties hold in the exponential chart $(B_{\delta}(x_i),\expt_{x_i}^{-1})$: 
\begin{equation}\label{VolComp} 
(1-\eps) \ dv_\xi\le dv_{\exp_{x_i}^*g} \le (1+\eps) \ dv_\xi, \qquad \text{ and }
\end{equation} 
\begin{equation}\label{DistComp} 
(1-\eps) \ d_\xi(x,y) \le d_g(\expt_{x_i}(x),\expt_{x_i}(y)) \le (1+\eps) \ d_\xi(x,y), 
\end{equation} 
where $\xi$ is the Euclidean metric. 	
\end{lemma}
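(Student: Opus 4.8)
\textbf{Proof plan for Lemma \ref{partitions}.}

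The plan is to choose $\delta$ small enough using two uniform (in the base point) continuity statements in normal coordinates, and then extract a finite subcover using compactness. I would proceed as follows.

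First I would record the standard fact that the metric components in normal coordinates converge to the Euclidean ones as one approaches the center: if $x_0\in M$ and $(g_{ij}(X))$ denotes the components of $(\exp_{x_0}^*g)$ in normal coordinates centered at $x_0$, then $g_{ij}(0)=\delta_{ij}$ and $\p_k g_{ij}(0)=0$, so $g_{ij}(X)=\delta_{ij}+O(|X|^2)$. By compactness of $M$ this expansion is uniform: there is $\rho_0>0$ smaller than the injectivity radius and a constant $C_0$ such that for every $x_0\in M$ and $|X|<\rho_0$ one has $|g_{ij}(X)-\delta_{ij}|\le C_0|X|^2$ as a bilinear form. From the uniform bound on $g_{ij}$ one gets a uniform bound on $\sqrt{\det(g_{ij})}$, giving \eqref{VolComp} for $\delta$ small: indeed $\sqrt{\det(g_{ij}(X))}=1+O(|X|^2)$ uniformly, so choosing $\delta$ with $C_0'\delta^2<\eps$ yields $(1-\eps)\,dv_\xi\le dv_{\exp_{x_0}^*g}\le(1+\eps)\,dv_\xi$ on $B_\delta$.

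Second I would establish \eqref{DistComp}. The upper bound $d_g(\exp_{x_0}(X),\exp_{x_0}(Y))\le(1+\eps)|X-Y|$ for $X,Y\in B_\delta$ is obtained by using the Euclidean segment from $X$ to $Y$ (which stays in $B_{2\delta}$) as a comparison curve: its $g$-length is $\int_0^1\sqrt{g_{ij}(\gamma(t))\dot\gamma^i\dot\gamma^j}\,dt\le(1+C_0\delta^2)^{1/2}|X-Y|$ by the uniform expansion, and $d_g$ is an infimum of lengths. For the lower bound one argues that any $g$-curve joining the two points that leaves $B_{2\delta}$ is automatically too long (length $\ge \rho_0/2$, say, which we can assume exceeds the diameter scale once $\delta$ is small, or one localizes the argument), so the infimum is attained among curves contained in a coordinate ball where $g\ge(1-C_0(2\delta)^2)\xi$ as bilinear forms; comparing lengths then gives $d_g(\exp_{x_0}(X),\exp_{x_0}(Y))\ge(1-\eps)|X-Y|$ after shrinking $\delta$ further. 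This reduces \eqref{DistComp} to the same kind of smallness condition $C_1\delta^2<\eps$.

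Third, having fixed such a $\delta>0$ (smaller than the injectivity radius and than any $\rho_0$ used above), the balls $\{B_\delta(x):x\in M\}$ form an open cover of the compact manifold $M$, so there is a finite subcover $\{B_\delta(x_i):i=1,\dots,N\}$; by construction each chart $(B_\delta(x_i),\exp_{x_i}^{-1})$ satisfies \eqref{VolComp} and \eqref{DistComp}. The only genuinely delicate point is the lower distance bound in \eqref{DistComp}: one must rule out that a minimizing $g$-geodesic between two points of $B_\delta(x_i)$ wanders far outside the coordinate ball where the comparison $g\ge(1-\eps)\xi$ is valid. I expect this to be the main obstacle, and it is handled exactly as in \cite[Lemma 2.53]{lee2018introduction} by noting that for $\delta$ small the $d_g$-ball $B_{2\delta}(x_i)$ is contained in the normal-coordinate image of a Euclidean ball on which the length estimate holds, together with the elementary observation that a curve exiting $B_{2\delta}(x_i)$ has $g$-length at least $(1-\eps)\cdot\delta$, which already exceeds the (at most $(1+\eps)\cdot 2\delta$-comparable) distance we are estimating only after being careful with constants — so one instead compares with the Euclidean distance in the single chart containing both endpoints and all short curves between them, where the metrics are uniformly two-sided comparable. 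Once this localization is in place the inequalities are immediate from the bilinear-form bounds $|g_{ij}(X)-\delta_{ij}|\le C_0|X|^2$.
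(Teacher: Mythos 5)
The paper offers no proof of this lemma at all—it is stated as "elementary" with a citation to Lee's \emph{Introduction to Riemannian Manifolds}—and your proposal is precisely the standard argument behind that reference: uniform closeness of $g_{ij}$ to $\delta_{ij}$ in normal coordinates (by compactness of $M$ and $g_{ij}(0)=\delta_{ij}$, $\p_k g_{ij}(0)=0$), which gives \eqref{VolComp} via $\sqrt{\det g_{ij}}$ and \eqref{DistComp} via two-sided length comparison, followed by a finite subcover. You correctly identify and resolve the only delicate point, namely that the lower bound in \eqref{DistComp} requires showing that near-minimizing curves cannot leave a fixed larger coordinate ball (choose $\delta$ so the bilinear-form comparison holds on, say, $B_{4\delta}$; a curve exiting $\exp_{x_i}(B_{4\delta})$ has $g$-length at least $(1-\eps)3\delta$, exceeding the upper bound $(1+\eps)2\delta$ already established), so the proof is complete and consistent with the cited source.
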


In the following, for any  $\alpha> 0$, we will say that $I_\eps=O(\eps^\alpha)$  as $\eps\to0$
if there exists a $C > 0$ such that $|I_\eps|\leq C\eps^\alpha$ as $\eps\to 0$.
Additionally, if $(a_i)$ and $(b_i)$ are two real sequences then $a_i= o(b_i)$ means that for any $\varepsilon>0$ and $i$ big enough one has $|a_i|\leq \varepsilon |b_i|$. 

\subsection{Triebel-Lizorkin and Besov spaces on Riemannian manifolds}

Consider a smooth connected complete Riemannian manifold $(M,g)$ with positive injectivity radius
$r_0$. Assume moreover that there exist $c>0$ and, for any multi-index $\alpha$, constants $c_\alpha>0$ such that 
$$ \det(g_{ij})\ge c,\qquad |D^\alpha g_{ij}|\le c_\alpha $$ 
in the normal chart at any point in $M$. 

H. Triebel defines  in \cite[Definition 3]{T2} the Triebel-Lizorkin spaces $F^s_{p,q}(M)$ and the Besov spaces $B^s_{p,q}(M)$ on $M$ using exponential  charts at points $x_j$ covering $M$ and a subordinate partition of unity $(\psi_j)_j$.
For simplicity, we suppose that $p,q\ge 1$ and $s\in\R$. They are defined by 
$$ F^s_{p,q}(M)=\{f\in D'(M):\, \|f\|_{F^s_{p,q}(M)}:=
\left(\sum_j \|\psi_jf\circ \exp_{x_j}\|_{F^s_{p,q}(\R^n)}^p\right)^{1/p}<\infty 
\}, $$  
$$ B^s_{p,q}(M)=(F^{s_0}_{p,p}(M),F^{s_1}_{p,p}(M))_{\theta,q} 
\qquad s=(1-\theta)s_0+\theta s_1, $$ 
where $D'(M)$ denotes the dual space of $C^\infty(M)$. 
He proves in \cite{T2} that the definition of $F^s_{p,q}(M)$ is independent of the choice of the points $x_j$ and the partition of unity and that the definition of $B^s_{p,q}(M)$ is independent of the choice of $s_0$ and $s_1$. Moreover, they are Banach spaces when $p,q\ge 1$.
He then gives  equivalent norms of these spaces to mimic the existing theory in $\R^n$. In particular, he proves that $F^s_{p,p}(M)=B^s_{p,p}(M)$ and that 
\begin{equation}\label{NormEquiv}
\|f\|_{F^s_{p,q}(M)}^p\simeq \sum_j \| \psi_jf\|_{F^s_{p,q}(M)}^p \qquad \text{if } p<\infty
\end{equation}
(and usual modification when $p=\infty$). 
Then in \cite{T}, he proves the characterization of these spaces using finite differences but, as he mentions himself, with unnatural restrictions on the coefficient. In particular, he cannot consider the case $s\in (0,1)$.

\subsection{Equivalence}

Next, we prove that the spaces $\widetilde{W}^{s,p}(M)$ satisfy a property analogous to the one satisfied by the Triebel-Lizorkin spaces $F^s_{p,q}(M)$ described in \eqref{NormEquiv}.

\begin{prop}\label{Prop.Equiv} 
	Let $(M,g)$ be a compact Riemannian manifold that we cover with a finite number of exponential charts at points $\{x_i: i=1, \dots, N\}$. Let $\{\eta_i: i=1, \dots, N\}$ be a partition of unity associated to this covering. There exist constants $C,C'>0$ such that for any $u\in \widetilde{W}^{s,p}(M)$, 
\begin{equation}\label{NormEquiv2}
	C'\|u\|_{s,p}^p\le \sum_{i=1}^N \|\eta_i u\|_{s,p}^p
	\le  C\|u\|_{s,p}^p. 
\end{equation}
\end{prop}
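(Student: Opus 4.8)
The plan is to establish the two inequalities in \eqref{NormEquiv2} separately, splitting each into the $L^p$-part and the seminorm part. For the $L^p$-terms there is nothing to do beyond the pointwise identity $\sum_i \eta_i = 1$ together with $0\le \eta_i\le 1$: since $|u|^p = |\sum_i \eta_i u|^p \le N^{p-1}\sum_i |\eta_i u|^p$ and conversely $\sum_i \int_M |\eta_i u|^p\,dv_g \le \int_M |u|^p \sum_i \eta_i^p\,dv_g \le \int_M |u|^p\,dv_g$, both bounds for $\|u\|_p^p$ versus $\sum_i\|\eta_i u\|_p^p$ follow with constants depending only on $N$. So the entire content is the comparison of $[u]_{s,p}^p$ with $\sum_i [\eta_i u]_{s,p}^p$.

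For the upper bound $\sum_i [\eta_i u]_{s,p}^p \le C[u]_{s,p}^p + C\|u\|_p^p$, I would use the elementary splitting
$$
\eta_i(x)u(x)-\eta_i(x)u(y) = \eta_i(x)\big(u(x)-u(y)\big) + u(y)\big(\eta_i(x)-\eta_i(y)\big),
$$
raise to the $p$-th power with the convexity inequality $|a+b|^p\le 2^{p-1}(|a|^p+|b|^p)$, and integrate against $d_g(x,y)^{-(n+sp)}\,dv_g(x)dv_g(y)$. The first resulting term is bounded by $2^{p-1}[u]_{s,p}^p$ since $0\le\eta_i\le 1$. For the second term, I use that each $\eta_i$ is Lipschitz, say with constant $L$, so $|\eta_i(x)-\eta_i(y)|^p\le L^p d_g(x,y)^p$, and I split the double integral over $d_g(x,y)<1$ and $d_g(x,y)\ge 1$. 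On the near-diagonal region $\int_M\big(\int_{d_g(x,y)<1} d_g(x,y)^{p-(n+sp)}\,dv_g(x)\big)|u(y)|^p\,dv_g(y)$ is finite because $p-(n+sp) = -(n-p(1-s)) > -n$, so the inner integral converges (using the volume comparison of Lemma \ref{partitions} to compare with the Euclidean computation $\int_{B_1} |z|^{p-(n+sp)}\,dz<\infty$); this gives $\le C\|u\|_p^p$. On the far region $d_g(x,y)\ge 1$ the kernel is bounded by $1$ and $M$ has finite volume, so that piece is also $\le C\|u\|_p^p$. Summing over the finitely many $i$ yields the right-hand inequality.

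For the lower bound $[u]_{s,p}^p \le C'^{-1}\sum_i\|\eta_i u\|_{s,p}^p$ — equivalently $[u]_{s,p}^p \le C\sum_i[\eta_i u]_{s,p}^p + C\sum_i\|\eta_i u\|_p^p$ — I would again write $u(x)-u(y) = \sum_i\big(\eta_i(x)u(x)-\eta_i(x)u(y)\big) + \sum_i u(y)\big(\eta_i(y)-\eta_i(x)\big)$, but now the second sum vanishes because $\sum_i\eta_i\equiv 1$; hence $u(x)-u(y) = \sum_i\big(\eta_i(x)u(x)-\eta_i(x)u(y)\big)$. Wait — that is not quite $\sum_i(\eta_iu(x)-\eta_iu(y))$, so more honestly: from $u = \sum_i \eta_i u$ we get $u(x)-u(y) = \sum_i\big((\eta_iu)(x)-(\eta_iu)(y)\big)$ directly, and then $|u(x)-u(y)|^p \le N^{p-1}\sum_i |(\eta_iu)(x)-(\eta_iu)(y)|^p$; integrating against the kernel gives $[u]_{s,p}^p \le N^{p-1}\sum_i[\eta_iu]_{s,p}^p$, which is even cleaner and needs no $L^p$ correction term on this side. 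Combining with the trivial $L^p$ comparison finishes the left-hand inequality.

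The only genuinely delicate point is the convergence of the near-diagonal integral in the upper bound, i.e. checking that the exponent $p - (n+sp)$ exceeds $-n$; this is exactly the condition $0<s<1$ (it fails at $s=1$, which is consistent with the Brezis--Bourgain--Mironescu phenomenon), and it must be carried out in the exponential charts using the two-sided bounds \eqref{VolComp}--\eqref{DistComp} of Lemma \ref{partitions} to pass between $d_g$ and the Euclidean distance, at the cost of harmless multiplicative constants. Everything else is bookkeeping with the finite partition of unity, the Lipschitz bounds on the $\eta_i$, the compactness (hence finite volume and bounded diameter) of $M$, and the convexity inequality for $|a+b|^p$.
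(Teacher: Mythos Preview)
Your proof is correct and follows essentially the same approach as the paper: the same splitting $\eta_i(x)u(x)-\eta_i(y)u(y)=\eta_i(x)(u(x)-u(y))+u(y)(\eta_i(x)-\eta_i(y))$ with the convexity inequality for the upper bound, and the identity $u=\sum_i\eta_iu$ with Jensen for the lower bound. You actually supply more detail than the paper, which simply asserts $\int_M|\eta_i(x)-\eta_i(y)|^p d_g(x,y)^{-(n+sp)}\,dv_g(x)<\infty$ without the Lipschitz/near-far argument you spell out.
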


\begin{proof}
	First, we observe that
	\begin{eqnarray*}
		|\eta_iu (x) - \eta_iu (y)|^p &\leq& 2^{p-1} (\eta_i (x)^p |u(x) - u(y)|^p + |u(y)|^p | \eta_i(x) - \eta_i(y)|^p ) \\
		&\leq& 2^{p-1} (|u(x) - u(y)|^p + |u(y)|^p |\eta_i(x) - \eta_i(y)|^p ).
	\end{eqnarray*}
	Then we get
	\begin{eqnarray*}
		&&\iint_{M\times M}\frac{|\eta_iu (x) - \eta_iu (y)|^p }{d_g(x,y)^{n+sp}}\ dv_g(x)dv_g(y) \leq \\
		&&2^{p-1} \left(\iint_{M\times M} \frac{|u(x) - u(y)|^p}{d_g(x,y)^{n+sp}}\ dv_g(x)dv_g(y) + \int_{M}|u(y)|^p \int_{M} \frac{| \eta_i(x) - \eta_i(y)|^p }{d_g(x,y)^{n+sp}}\ dv_g(x)dv_g(y) \right).
	\end{eqnarray*}	
	Since
	\begin{eqnarray*}	
	\int_{M} \frac{| \eta_i(x) - \eta_i(y)|^p }{d_g(x,y)^{n+sp}}\ dv_g(x) < \infty \hspace{0.5cm} \text{ and }	\hspace{0.5cm}
	\int_{M} | \eta_iu(x)|^p \ dv_g(x) <  C \int_{M} | u(x)|^p \ dv_g(x),
	\end{eqnarray*}	
	we have
	\begin{eqnarray*}	
	\sum_{i=1}^N \Vert  \eta_i u \Vert_{s,p}^p \leq C \Vert  u \Vert_{s,p}^p, 
	\end{eqnarray*}	
	with $C>0$ depending on $N,p, |M|$ and $\Vert \eta_i\Vert_\infty$, for all $i=1, \dots, N$.

	On the other hand, we note that $\|u\|_{p}=\|\sum_i \eta_i u\|_{p}\le \sum_i \|\eta_iu\|_{p}$.  
	By Jensen's inequality we have 
	\begin{equation}\label{Jensen1}
	\|u\|_{p}^{p} \le  N^{p-1} \sum_{i=1}^N \|\eta_iu\|_{p}^{p}.
	\end{equation}
	Furthermore, we can write 
	$$
	 |u(x)-u(y)|^p=|\sum_{i=1}^{N}(\eta_iu(x)-\eta_iu(y))|^p
	\leq N^{p-1}\sum_{i=1}^{N}|\eta_iu(x)-\eta_iu(y)|^p
	$$
	and then
	\begin{equation}\label{Jensen2}
		\iint_{ M\times M} \frac{ |u(x)-u(y)|^p}{d_g(x,y)^{n+sp}}\,dv_g(x)dv_g(y)
		\leq C' \sum_{i=1}^N \iint_{  M\times M} \frac{ |\eta_iu(x)-\eta_iu(y)|^p}{d_g(x,y)^{n+sp}}\,dv_g(x)dv_g(y).
	\end{equation}
	Thus, from \eqref{Jensen1} and \eqref{Jensen2}, it follows
	$$
	\Vert  u \Vert_{s,p}^p = [ u]_{s,p}^p  + \|u\|_{p}^{p}  \leq
     C' \sum_{i=1}^N ( [\eta_i u]_{s,p}^p  + \|\eta_iu\|_{p}^{p} ) = C'	\sum_{i=1}^N \Vert   u \Vert_{s,p}^p. 
	$$
	with $C'>0$ depending on $N,p$.
\end{proof}

As a corollary of Proposition \ref{Prop.Equiv} we can prove the Theorem \ref{FractSpaces}.

\begin{proofEquiv}
	\end{proofEquiv} 
For $\eps>0$, let $\{\eta_i,\, i=1,\dots,N\}$ be a partition of unity adapted to the covering 
$\{B_{\delta}(x_i),\,i=1,\dots,N\}$ given by Lemma \eqref{partitions} and let $v_i=(\eta_iu)\circ \expt_{x_i}$.
 In order to prove the Theorem, we have to check first that 
$\|\eta_j u\|_{s,p} 
\simeq \|v_j\|_{s,p}$.
Indeed we have 
$$ 
	(1-\eps)  \int_{\R^n} |v_i|^{p} \, dv_\xi \le \int_M |\eta_iu|^{p} \,dv_g 
	=\int_{\R^n} |v_i|^{p} \, dv_{\expt_{x_i}^{*}g} \\
	 \le  (1+\eps)  \int_{\R^n} |v_i|^{p} \, dv_\xi.
$$
So we have 
\begin{equation}\label{part.norm}
 (1-\eps) \|v_i\|_{p}^p\leq \|\eta_i u\|_{p}^p\leq (1+\eps) \|v_i\|_{p}^p.
\end{equation}

Now we shall estimate $	[\eta_i u]_{s,p}$.
Denote $U_i:=\expt_{x_i}^{-1}(B_{\delta}(x_i))$, $i=1,\dots,N$. 
Since $\text{supp}(v_i) \subset U_i$, 
we can write 
\begin{equation}\label{part}
	[v_i]_{s,p}^p =  2\iint_{ x\in U_i,y\not\in U_i} \frac{ |v_i(x)-v_i(y)|^p}{|x-y|^{n+sp}}\,dxdy + 
	\iint_{ x\in U_i,y\in U_i} \frac{ |v_i(x)-v_i(y)|^p}{|x-y|^{n+sp}}\,dxdy.
\end{equation} 

Recall that $\text{supp}\,\eta_i$ is a compact subset of $B_{\delta}(x_i)$. 
Denote $K_i:=\expt_{x_i}^{-1}(\text{supp}(\eta_i))$. 
Since $K_i$ is compact, we can take some $\alpha>0$ such that 
$$ |x-y|\ge \alpha>0 \qquad \text{for any $x\in K_i$, $y\in \R^n\backslash U_i$, $i=1,\dots,N$.}$$ 
It follows that 
\begin{eqnarray*} 
	\iint_{x\in U_i,\,y\not\in U_i}  \frac{ |v_i(x)-v_i(y)|^p}{|x-y|^{n+sp}}\,dxdy 
	& = & \iint_{ x\in K_i,\,y\not\in U_i} \frac{ |v_i(x)|^p}{|x-y|^{n+sp}}\,dxdy \\  
	& \le & C_{\alpha,n} \int_{K_i} |v_i(x)|^p\,dx   \\
	& \le & C_{\alpha,n,\eps} \int_M \eta_i |u|^p\,dv_g. 
\end{eqnarray*} 
	
To bound the 2nd integral in \eqref{part}, we write

\begin{eqnarray*} 
		\iint_{ x\in U_i,y\in U_i} \frac{|v_i(x)-v_i(y)|^p}{|x-y|^{n+sp}}\,dxdy 
& \leq&C_\eps\iint_{x\in U_i,y\in U_i}\frac{|v_i(x)-v_i(y)|^p}{|x-y|^{n+sp}}\,dv_{\exp^*_{x_i}g}(x)dv_{\exp^*_{x_i}g}(y)\\
& = & C_\eps \iint_{M \times M}\frac{|(\eta_iu)(x)-(\eta_iu)(y)|^p}{d_g(x,y)^{n+sp}}\,dv_g(x)dv_g(y).\\
\end{eqnarray*} 
Then we have $
[v_i]_{s,p}^p \leq   C_{\eps,\alpha} [\eta_i u]_{s,p}^p.
$
In the same way, it can be proved that
$$
[v_i]_{s,p}^p \geq   C'_{\eps,\alpha} [\eta_i u]_{s,p}^p
$$
for some constant $C'_{\eps,\alpha}$. Therefore we have that the norms are equivalent and 
using the fact that $\widetilde{W}^{s,p}(\R^n)=B^s_{p,p}(\R^n)=F^s_{p,p}(\R^n)$ we conclude with 
\eqref{NormEquiv}. 
\qed

From now on, we will simply denote 	$ \widetilde{W}^{s,p}(M)=W^{s,p}(M)$.  
It follows from the previous Theorem and the general theory developped by H. Triebel that the spaces $W^{s,p}(M)$ satisfy all the usual properties. We describe some of them in the following Proposition.
%
%
\begin{prop}  For $s\in(0,1)$ and $0<sp<n$, we have
\begin{enumerate}[(i)]
	\item $W^{s,p}(M)$ is a Banach space with the norm $\| \cdot \|_{s,p}$.
	\item If $p>1$ the space $W^{s,p}(M)$ is reflexive.
	\item The space $C^\infty(M)$ of smooth functions is dense in $W^{s,p}(M).$
	\item The embedding  $W^{s,p}(M)\hookrightarrow L^q(M)$ is continuous when $q\le p^*$ and compact when $q < p^*$. 
\end{enumerate}

\end{prop}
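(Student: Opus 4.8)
The plan is to derive all four properties from the identification $W^{s,p}(M)=\widetilde W^{s,p}(M)=F^s_{p,p}(M)=B^s_{p,p}(M)$ of Theorem \ref{FractSpaces}, combined with the localization estimate of Proposition \ref{Prop.Equiv} and the Euclidean theory of $W^{s,p}(\R^n)$ from \cite{NPV}. Properties (i) and (ii) are already contained in Triebel's work, but I would also record the short direct arguments. For (i): a Cauchy sequence $(u_k)$ in $W^{s,p}(M)$ is Cauchy in $L^p(M)$, hence has an $L^p$-limit $u$ and an almost everywhere convergent subsequence, and Fatou's lemma applied to the double integral defining $[u-u_j]_{s,p}^p$ along this subsequence gives $[u-u_j]_{s,p}\to 0$. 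For (ii): the map $u\mapsto\big(u,(u(x)-u(y))\,d_g(x,y)^{-(n+sp)/p}\big)$ identifies $W^{s,p}(M)$ isometrically with a subspace of the reflexive space $L^p(M)\times L^p(M\times M)$, which is closed by (i), hence reflexive when $p>1$.

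For (iii), I would localize through Proposition \ref{Prop.Equiv}: since $\|u\|_{s,p}^p\simeq\sum_i\|\eta_i u\|_{s,p}^p$, it is enough to approximate each $\eta_i u$ by smooth functions. As in the proof of Theorem \ref{FractSpaces}, $\eta_i u$ pulls back through the exponential chart to some $v_i\in W^{s,p}(\R^n)$ supported in the fixed compact set $U_i$; mollifying, $v_i*\rho_\tau\to v_i$ in $W^{s,p}(\R^n)$ as $\tau\to0$ (standard, see \cite{NPV}), and for $\tau$ small $v_i*\rho_\tau$ is still supported in $U_i$, hence descends to a function in $C^\infty(M)$ supported in $B_\delta(x_i)$; the norm equivalences established in that proof transfer the convergence back to $M$, and summing over $i$ yields a smooth approximation of $u$.

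For (iv), the continuous embedding follows again by localization: $\|u\|_{L^q(M)}\le\sum_i\|\eta_i u\|_{L^q(M)}$ since $q\ge1$, each summand is bounded by a multiple of $\|v_i\|_{L^q(\R^n)}\lesssim\|v_i\|_{L^{p^*}(\R^n)}\lesssim\|v_i\|_{W^{s,p}(\R^n)}\simeq\|\eta_i u\|_{s,p}$ (using $q\le p^*$, the compact support of $v_i$, and the fractional Sobolev inequality in $\R^n$), and $\sum_i\|\eta_i u\|_{s,p}\lesssim\|u\|_{s,p}$ by the discrete H\"older inequality and Proposition \ref{Prop.Equiv}. For the compact embedding when $q<p^*$, I would take a bounded sequence $(u_k)$ in $W^{s,p}(M)$, note that the pulled-back pieces $v_{i,k}$ are bounded in $W^{s,p}(\R^n)$ with supports in the fixed compact $\overline{U_i}$, invoke the Rellich-Kondrachov theorem for fractional Sobolev spaces on bounded domains (\cite{NPV}) together with a diagonal extraction over $i=1,\dots,N$ to obtain a subsequence along which each $v_{i,k}$ converges in $L^q(\R^n)$, and transfer back via $u_k=\sum_i\eta_i u_k$. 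The one delicate point --- and the main technical obstacle of the proposition --- is controlling the non-local ``tail'' interactions between $x\in U_i$ and $y\notin U_i$ in the Gagliardo seminorm when passing between $M$ and $\R^n$; but this was already handled in the proof of Theorem \ref{FractSpaces} via the uniform bound $|x-y|\ge\alpha>0$ on $\text{supp}(v_i)\times(\R^n\setminus U_i)$, so the same device applies verbatim.
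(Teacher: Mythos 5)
Your proposal is correct, but it takes a genuinely different route from the paper. The paper proves this Proposition by pure citation: having established in Theorem \ref{FractSpaces} the identification $\widetilde W^{s,p}(M)=F^s_{p,p}(M)=B^s_{p,p}(M)$, it simply invokes Triebel's general theory of $F^s_{p,q}(M)$ and $B^s_{p,q}(M)$ (and Guo--Zhang--Zhang's work for the embeddings) to conclude that all four properties hold, without re-deriving any of them. You instead re-prove each property directly from the $\widetilde W^{s,p}(M)$ definition \eqref{DefFracSob}--\eqref{DefSemiNorm}, using only the localization Proposition \ref{Prop.Equiv}, the chart-by-chart comparisons from the proof of Theorem \ref{FractSpaces}, and the classical Euclidean fractional Sobolev theory of \cite{NPV}: Fatou for completeness, the isometric embedding $u\mapsto\bigl(u,\,(u(x)-u(y))\,d_g(x,y)^{-(n+sp)/p}\bigr)$ into $L^p(M)\times L^p(M\times M)$ for reflexivity, mollification of the pulled-back pieces $v_i$ for density, and Rellich--Kondrachov on bounded Euclidean domains (plus the $\alpha$-separation trick for the tails) for the embeddings. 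Your argument is longer but more self-contained and elementary, and it has the advantage of not requiring the reader to unpack Triebel's Littlewood--Paley/interpolation machinery; the paper's argument is shorter but opaque, effectively outsourcing the entire Proposition. Both are sound. The only points you should make explicit if writing this out in full are (a) that the norm comparison $\|\eta_i u\|_{s,p}\simeq\|v_i\|_{W^{s,p}(\R^n)}$ from the proof of Theorem \ref{FractSpaces} remains uniform when $v_i$ is replaced by $v_i*\rho_\tau$ for $\tau$ small, because the supports $K_i+B_\tau$ stay a fixed positive distance $\alpha'>0$ from $\R^n\setminus U_i$; and (b) that the pushforward $(v_i*\rho_\tau)\circ\exp_{x_i}^{-1}$, extended by zero, is indeed in $C^\infty(M)$, which is clear since its support is compact in $B_\delta(x_i)$.
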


%
%

%

\section{Proof of Theorem \ref{subcritical}}

In this section, we shall prove the existence of a solution to the problem
\[
\La_{\Ka}u + h|u|^{p-2}u = f|u|^{q-2}u. 
\]
where  $1<q<p^*$, the functions $h,f$ satisfy the coercivity condition \eqref{coercivity} and $\La_{\Ka}$ is the non-local operator defined in \eqref{LK}. 


Recall that $\Ka( \cdot, \cdot \ ;g):(M\times M)\backslash \Di\rightarrow(0,\ +\infty)$ is a function which satisfies \ref{K1}-\ref{K4}.
The first three conditions for $\Ka$ are standard (see, e.g. \cite{KKP,P,SV2013}) and condition \ref{K4} is
necessary to prove our present result. In \cite[Assumption 2.1]{frank2008non}, the authors consider a family of
measurable functions $k_\eps$ , for $\eps > 0$, on  $\R^n \times \R^n$ satisfying similar properties to $\eps^{n+sp}\tilde \Ka(X,Y,\eps^2 g_\eps)$. A model for $\Ka$ is given by $\Ka(x,y;g)=d_g(x,y)^{-(n+ps)} + d_g(x,y)^{-\alpha}$
with $\alpha\in(0,n+ps)$. The operator  $\La_{\Ka}$ has been studied for particular values of $\alpha$ in \cite{alonso2018integral}  and \cite{dNS}. 
In what follows, we give a typical example for the kernel $\Ka$.

\begin{example}
	Look at the classical fractional kernel given by
	$$
	\Ka_0(x,y;g)=d_g(x,y)^{-(n+ps)}+d_g(x,y)^{-\alpha}
	$$
	with $\alpha\in(0,n+ps)$. For a particular $\alpha$, the operator $\La_{\Ka_0}$ is the fractional Laplacian operator $(-\Delta_{p,g})^s$.
	Then, $\Ka_0$ trivially satisfies conditions \eqref{K2} and \eqref{K3}. Let us now check the third condition. Indeed,
	\begin{eqnarray*}
				\tilde \Ka_0(X,Y;\eps^2 g_\eps)&=& \tilde \Ka_0(X,Y;T^*\exp_{x_0}^* g)
			\\
			&=& \Ka_0(\exp_{x_0}(\eps X),\exp_{x_0}(\eps Y); g)
			\\
			&=& d_g(\exp_{x_0}(\eps X), \exp_{x_0}(\eps Y))^{-(n+ps)}+d_g(\exp_{x_0}(\eps X), \exp_{x_0}(\eps Y))^{-\alpha}
			\\
			&=& d_{\exp_{x_0}^*g}(\eps X, \eps Y)^{-(n+ps)}+d_{\exp_{x_0}^*g}(\eps X, \eps Y)^{-\alpha}
	\end{eqnarray*}
	because $exp_{x_0}$ is an isometry from the ball 
	$(B_\delta ,(exp_{x_0})^*g)$ in $T_{x_0}M$ to the ball $(B_\delta(x_0),g)$ in $M$. Finally, from 
	$$
	\eps^{(n+ps)}\tilde \Ka_0(X,Y;\eps^2 g_\eps)=  d_{\exp_{x_0}^*g}(X, Y)^{-(n+ps)}+ \eps^{n+ps-\alpha}d_{\exp_{x_0}^*g}(X, Y)^{-\alpha}, 
	$$ 
	and Lemma \ref{partitions}, it follows that \eqref{K4} holds for $\Ka_0$. 
\end{example}

In the following, we shall use  the Mountain Pass Theorem to prove Theorem \ref{subcritical}, where $p<q.$
By definition, a sequence $\left(u_{i}\right)$ of functions in $W^{s,p}(M)$ is said to be a Palais-Smale sequence for $I$ if:
\begin{enumerate}[(PS1)]
	\item $I\left(u_{i}\right)$ is bounded, and
	\item $I' \left(u_{i}\right) \rightarrow 0$ in $W^{s,p}(M)'$ as $i \to +\infty$, where $W^{s, p}(M)'$ denotes the dual space of $W^{s, p}(M)$.
\end{enumerate}
We say that $I$ satisfies (PS) condition in $W^{s, p}(M)$, if for any Palais-Smale sequence $\left\{u_{i}\right\} \subset W^{s, p}(M)$, there exists a convergent subsequence of $\left\{u_{i}\right\}$.
Recall

\begin{MPL} Let $I$ be a $C^{1}$ function on a Banach space $E$. Suppose that $I$ satisfies the Palais-Smale condition. Suppose also
	\begin{enumerate}[(MP1)]
		\item\label{MP1} $I(0)=0$,  
		\item\label{MP2} there exist constants $\rho, r$ such that
		$
		I(u) \geq \rho \ \text{ for all } u \in \partial B_{0}(r)\subset E, \text{ and }
		$
		\item\label{MP3} there exists an element $u_0\in E$ with
		$
		I\left(u_{0}\right)<\rho.
		$
	\end{enumerate}
	Let	
	$$
	c=\inf _{\gamma \in \Gamma} \max _{u \in \gamma} \Phi(u)
	$$
	where $\Gamma$ stands for the class of continuous paths joining 0 to $u_{0}$. Then $c$ is a critical value of $I$.	
\end{MPL}

Let 
$J_\Ka:W^{s,p}(M)\to \R$ defined by
\[
J_\Ka(u)=\frac{1}{p}\iint_{M\times M}|u(x)-u(y)|^{p}\Ka(x,y;g)dv_g (x)dv_g(y)+\frac{1}{p}\int_{M}h|u(x)|^{p}dv_g(x),
\]
and let $I:W^{s,p}(M)\to \R$ be the energy functional associated with the problem:
\[
I(u)= J_\Ka(u) - \frac{1}{q}\int_M f|u|^{q}\,dv_g.
\]

\begin{lemma}\label{I esC1}
		$I\in C^{1}(W^{s, p}(M),\R)$ and 
			\[
		\left\langle I^{\prime}(u), v\right\rangle=\left\langle \La_\Ka u,v\right\rangle
		+\int_M h|u|^{p-2} u v \ dv_g - \int_M f|u|^{q-2} u v \ dv_g
		\]
\end{lemma}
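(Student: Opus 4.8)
The plan is to verify that the functional $I$ is continuously Fréchet differentiable by treating its three constituent pieces separately: the non-local term $J_\Ka^{(1)}(u):=\frac1p\iint|u(x)-u(y)|^p\Ka\,dv_gdv_g$, the local potential term $\frac1p\int_M h|u|^p\,dv_g$, and the critical-type term $\frac1q\int_M f|u|^q\,dv_g$. For each piece I would first exhibit a candidate Gâteaux derivative by differentiating under the integral sign along a segment $t\mapsto u+tv$, then justify the differentiation via the dominated convergence theorem together with the mean value theorem applied to the maps $\tau\mapsto|\tau|^p$ and $(\tau)\mapsto|\tau|^p$ on $\R$, and finally prove that the resulting derivative is continuous in $u$ with values in $W^{s,p}(M)'$, which upgrades Gâteaux to Fréchet differentiability.

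For the non-local term, the pointwise derivative of $t\mapsto\frac1p|(u+tv)(x)-(u+tv)(y)|^p$ at $t=0$ is $|u(x)-u(y)|^{p-2}(u(x)-u(y))(v(x)-v(y))$; to pass the limit inside $\iint_{M\times M}(\cdot)\Ka\,dv_gdv_g$ I would dominate the difference quotient, using the elementary inequality $\big||a|^{p-2}a-|b|^{p-2}b\big|\le c_p(|a|^{p-2}+|b|^{p-2})|a-b|$ (for $p\ge2$; for $1<p<2$ the sharper $\le c_p|a-b|^{p-1}$), by a multiple of $\big(|u(x)-u(y)|^{p-1}+|v(x)-v(y)|^{p-1}\big)|v(x)-v(y)|$, then apply Hölder's inequality with exponents $p/(p-1)$ and $p$ against the measure $\Ka\,dv_gdv_g$; the finiteness of the resulting integrals follows from $u,v\in W^{s,p}(M)$ together with hypothesis (K3), which gives $\Ka(x,y;g)\le \Lambda\, d_g(x,y)^{-(n+ps)}$ and hence controls $\iint|u(x)-u(y)|^p\Ka\,dv_gdv_g$ by $\Lambda[u]_{s,p}^p$. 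The same Hölder estimate shows $|\langle\La_\Ka u,v\rangle|\le C[u]_{s,p}^{p-1}[v]_{s,p}$, so $\La_\Ka u\in W^{s,p}(M)'$, and continuity of $u\mapsto\La_\Ka u$ follows from the inequality above combined with the continuity of the Nemytskii-type operator $u\mapsto |u(x)-u(y)|^{p-2}(u(x)-u(y))$ from $L^p(M\times M,\Ka\,dv_gdv_g)$ into $L^{p/(p-1)}$ of the same measure (a standard consequence of dominated convergence after extracting a.e.-convergent subsequences).

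The two local terms are handled by the standard theory of Nemytskii operators on $L^p(M)$: differentiating $\frac1p\int_M h|u|^p$ gives $\int_M h|u|^{p-2}uv$, with the derivative operator $u\mapsto h|u|^{p-2}u$ continuous from $L^p(M)$ into $L^{p/(p-1)}(M)$ since $h$ is bounded and $M$ has finite volume; differentiating $\frac1q\int_M f|u|^q$ gives $\int_M f|u|^{q-2}uv$, and here the continuous embedding $W^{s,p}(M)\hookrightarrow L^q(M)$ (valid for $q<p^*$, stated earlier) ensures that $u\mapsto f|u|^{q-2}u$ lands in $L^{q/(q-1)}(M)$ and, composed with the embedding, defines a continuous map into $W^{s,p}(M)'$. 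Summing the three contributions yields the claimed formula for $\langle I'(u),v\rangle$ and the $C^1$ regularity. The main obstacle I anticipate is the non-local term in the non-reflexive-looking borderline regime $1<p<2$, where the difference-quotient domination requires the concavity-type inequality $\big||a|^{p-2}a-|b|^{p-2}b\big|\le c_p|a-b|^{p-1}$ and a slightly more careful Hölder bookkeeping against the singular kernel; everything else is routine once (K3) is invoked to replace $\Ka$ by the model kernel $d_g(x,y)^{-(n+ps)}$ up to the constant $\Lambda$.
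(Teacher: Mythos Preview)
Your proposal is correct and follows essentially the same route as the paper: decompose $I$ into the non-local piece and the local Nemytskii pieces, compute each G\^ateaux derivative, and then establish continuity of $u\mapsto I'(u)$ via H\"older's inequality against the kernel $\Ka$ (using (K3)) together with standard Nemytskii continuity for the $h$- and $f$-terms. Your version is in fact more careful than the paper's rather terse Step~1---in particular your explicit domination argument and the case distinction $p\ge 2$ versus $1<p<2$ fill in details the paper leaves implicit.
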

We shall prove the Lemma in two steps.
	\begin{step1}
		The functional $J_\Ka \in C^{1}\left(W^{s, p}(M), \mathbb{R}\right)$ and
		\[
		\left\langle J_\Ka^{\prime}(u), v\right\rangle=\left\langle \La_\Ka u,v\right\rangle
		+\int_M h|u|^{p-2} u v \ dv_g
		\]
		for all $u, v \in W^{s, p}(M) .$ Moreover, if $u \in W^{s, p}(M)$, then $J_\Ka^{\prime}(u) \in W^{s, p}(M)'$.
	\end{step1}
	
	\begin{proof}
		Firstly, it is easy to see that for all $u, v \in W^{s, p}(M)$, it holds 
		\[
		\left\langle J_\Ka^{\prime}(u), v\right\rangle=\left\langle \La_\Ka u,v\right\rangle
		+\int_M h|u|^{p-2}uv \ dv_g.
		\]
		Then, it follows that $J_\Ka^{\prime}(u) \in$ $W^{s, p}(M)'$  for each $u \in W^{s, p}(M).$
		Next, we prove that $J_\Ka \in C^{1}\left(W^{s, p}(M), \mathbb{R}\right) .$ Let $\left\{u_{j}\right\} \subset W^{s, p}(M)$ a sequence such that $u_{j} \rightarrow u$ for some $u \in W^{s, p}(M)$, strongly in $W^{s, p}(M)$ as $n \rightarrow \infty.$
Now, using H\"older's inequality, we have
	\begin{eqnarray*}
			\left\langle \La_\Ka u_j,v\right\rangle &\leq&
			\iint_{M\times M}\left|u_{j}(x)-u_{j}(y)\right|^{p-1}\Ka(x,y;g)^{\frac{p-1}{p}}(v(x)-v(y))\Ka(x,y;g)^{\frac{1}{p}} dv_g(x) dv_g(y)\\
			&\leq&\left(\iint_{M\times M}\left|u_{j}(x)-u_{j}(y)\right|^{p}\Ka(x,y;g)dv_g(x) dv_g(y)\right)\\
			&\times& \left(\iint_{M\times M}\left|v(x)-v(y)\right|^{p}\Ka(x,y;g)dv_g(x) dv_g(y)\right)\\
		\end{eqnarray*}
	From there, we get
\begin{equation}\label{cont1}
	\left\langle \La_\Ka u_j,v\right\rangle 	\leq C [u_j]^p_{s,p} [v]^p_{s,p}
\end{equation}
for $v\in W^{s,p}(M)$.
		Additionally, the fact that $u_{j} \to u$ strongly in $W^{s, p}(M)$ implies
		\begin{equation}\label{cont2}
			\lim _{n \rightarrow \infty} \int_{M}\left(h|u_{j}(x)|^{p}-h|u(x)|^{p}\right) \, dv_g(x) =0.
		\end{equation}
		Combining \eqref{cont1} and \eqref{cont2}, we have
		\[
		\left\|J_\Ka^{\prime}\left(u_{j}\right)-J_\Ka^{\prime}(u)\right\|=\sup _{v \in W_{0}^{s, p}(M),\|v\|_{s, p} \leq 1}\left|\left\langle J_\Ka^{\prime}\left(u_{j}\right)-J_\Ka^{\prime}(u), v\right\rangle\right| \rightarrow 0
		\]
		as $n \rightarrow \infty.$
	\end{proof}
	Using the same strategy as in the previous Step, we have
	
	\begin{step2} If we define
		\[
		H(u)=	\frac{1}{q} \int_M f|u|^{q}\,dv_g,
		\]
		then $H \in C^{1}\left(W^{s,p}(M), \mathbb{R}\right)$ and
		\[
		\left\langle H^{\prime}(u), v\right\rangle= \int_M f|u|^{q-2} u v \ dv_g
		\]
		for all $u, v \in W^{s, p}(M).$
	\end{step2}
	Due to Steps 1 and 2, critical points of $I:W^{s,p}(M)\to \R$ are weak solutions to the problem \eqref{MainEqu200}. 
	We shall study the cases $p<q$ and $q<p$ separately.
We intend to apply the Mountain Pass Theorem to $I$ for the first case  with $E=W^{s,p}(M)$. 


\begin{SubcriticalProof1}
\end{SubcriticalProof1}
	\begin{step1}
		$I$ satisfies the Palais-Smale condition.
	\end{step1}
	\begin{proof}
		Let $\left(u_i\right)_i\subset W^{s,p}(M)$ be a Palais-Smale sequence for $I$ i.e. $I(u_i)=O(1)$ and $I'(u_i)=o(1)$. 
		Then 
		$$ o(\|u_i\|_{s,p})\|u_i\|_{s,p} + O(1) =\left\langle I'(u_i), \, u_i \right\rangle- qI(u_i) = (1-q/p)J_\Ka(u_i). $$ 
		Because of the coercivity assumption \eqref{coercivity}, we have $J_\Ka(u_i)\ge C\|u_i\|_{s,p}^p$. Since $p\neq q$, we obtain 
		$\|u_i\|_{s,p}^p=o(\|u_i\|_{s,p})\|u_i\|_{s,p} + O(1)$ from which we deduce that  $(u_i)$ is bounded in $W^{s,p}(M)$.
		We can thus extract from $(u_i)_i$ a subsequence converging to some $u\in W^{s,p}(M)$ weakly in $W^{s,p}(M)$ and strongly in $L^p(M)$ and $L^q(M)$ 
		(because $q<p^*$). Using this strong convergence and denoting $v_i:=u_i-u$, it is easily seen that 
	   $$ o(1) = \left\langle I'(u_i), v_i \right\rangle= \iint |u_i(x)-u_i(y)|^{p-2}(u_i(x)-u_i(y))(v_i(x)-v_i(y))K(x,y,g) dv_g(x)dv_g(y) \, + \, o(1). $$ 
		Moreover, the weak convergence $v_i\to 0$ in $ W^{s,p}(M)$ gives that 
	    $$ \iint |u(x)-u(y)|^{p-2}(u(x)-u(y))(v_i(x)-v_i(y))K(x,y,g) dv_g(x)dv_g(y) = o(1). $$
		Thus 
        \[ \iint \big(|u_i(x)-u_i(y)|^{p-2}(u_i(x)-u_i(y)) -  |u(x)-u(y)|^{p-2}(u(x)-u(y))\big)
        \]
        \[
		\times\big(v_i(x)-v_i(y)\big)K(x,y,g) dv_g(x)dv_g(y) = o(1).  
		\]
	Applying the classical inequality (see \cite{adams1999function})
    $$ (|a|^{p-2}a - |b|^{p-2}b)(a-b) \ge C|a-b|^p \qquad a,b\in \R , p>1$$ 
    with $a=u_i(x)-u_i(y)$ and $b=u(x)-u(y)$ we obtain 
   $$ \iint |v_i(x)-v_i(y)|^p K(x,y,g) dv_g(x)dv_g(y) = o(1) $$ 
    so that $[v_i]_{s,p}\to 0$. Since $v_i\to 0$ in $L^p$, we deduce that $v_i\to 0$ strongly in $ W^{s,p}(M)$. 
	\end{proof}
	We finally verify the remaining hypothesis of the Mountain Pass Theorem.
	\begin{step2}
		$I$ satisfies conditions (MP\ref{MP1}) to (MP\ref{MP3}).
	\end{step2}
	\begin{proof}
		Since $J_\Ka$ is coercive, and thanks to the Sobolev inequality corresponding to the embedding of $W^{s,p}(M)$ in $L^{q}(M)$, there exists positive constants $C_{1}, C_{2}>0$ such that for any $u \in W^{s,p}(M)$,
		\begin{equation}
			I(u)= J_\Ka(u) - \int_M f|u|^{q}\,dv_g \ \geq \ C\left( \|u\|_{s,p}^{p} - \|u\|_q^q\right)
			\ \geq \ C\left( \|u\|_{s,p}^{p}- \|u\|_{s,p}^{q}\right)
		\end{equation}
		
		Taking $r>0$ small enough, then it follows that there exists $\rho>0$ such that for any $u \in \partial B_{0}(r)$, $I(u) \geq \rho$. Independently, $I(0)=0$, while for $v_{0} \in W^{s,p}(M), v_{0} \not \equiv 0$,
		\[
		\lim _{t \rightarrow+\infty} I\left(t v_{0}\right)=\lim _{t \rightarrow+\infty} \left( t^pJ_\Ka(v_0) - \frac{t^q}{q}\int_M f|v_0|^{q}\,dv_g\right)=-\infty
		\]
		It follows that there exists $r>0, \rho>0$, and $u_{0}=t v_{0}$ such that $ 
		I\left(u_{0}\right)<\rho$, $u_{0} \in W^{s,p}(M) \backslash B_{0}(r)$. The Mountain Pass Lemma then gives the existence of a critical point $u_0$ of $I$
		such that
		\[
		I(u_0) > 0=I(0)
		\]
		so that $u_0\neq 0.$
	Thus, the assertion of Theorem \ref{subcritical} follows.	
		\end{proof}
	

\begin{SubcriticalProof2}
\end{SubcriticalProof2}
	We shall show that the functional $I$ is weakly lower semi-continuous.
Let $\left\{u_{i}\right\} \subset W^{s, p}(M)$, such that $u_{i} \rightarrow u$ weakly in $W^{s, p}(M)$ as $n \rightarrow \infty$. Thus, we get that $u_{i} \rightarrow u$ strongly in $L^{q}(M)$.  Due to Lemma \ref{I esC1}, we have the following inequality

$$
I(u_{i})>I(u)+\left\langle\left(I^{\prime}(u), u_{i}-u\right)\right\rangle .
$$
Then we get that $I(u) \leq \liminf _{n \rightarrow \infty}I\left(u_{i}\right)$, ie, $I$ is weakly lower semi-continuous in $W^{s, p}(M)$.
On the other hand, we have $I(t v_0) \to +\infty$ as $t\to +\infty$  for any $v_{0} \in W^{s,p}(M), v_{0} \not \equiv 0$. Since $I$ is weakly lower semi-continuous, it has a minimum point in $W^{s,p}(M)$.
\qed

\section{Optimal Sobolev embedding for p=2 in the critical case}

It is well-known (see e.g. \cite{NPV}) that the following fractional Sobolev embedding 
holds: there exists a constant  $A>0$ such that 
\begin{equation}\label{IneqRn} 
\left( \int_{\R^n} |v|^{p^*}\,dx\right)^\frac{p}{p^*} 
\le A \iint_{\R^n\times\R^n} \frac{|v(x)-v(y)|^p}{|x-y|^{n+sp}}\,dxdy 
\end{equation} 
for any $v:\R^n\to \R$ measurable and compactly supported. 
We denote by $K(n,s,p)$ the best constant in this embedding, namely 
\begin{equation}\label{BestConstSob}
K(n,s,p)^{-1} = \inf_{u\in W^{s,p}(\R^n)} \frac{[u]_{s,p}^p}{\|u\|_{p^*}^{p}}. 
\end{equation}
In order to prove our main result, we will consider $p=2$ from now on.
\begin{demTeo}
	
\begin{step1}
	Suppose that there are constants $C_1,C_2>0$ such that 
	\begin{equation}\label{BestConstantGen} 
	\left( \int_M |u|^{2^*} \,dv_g\right)^\frac{2}{2^*} 
	\le C_1 \iint_{M\times M} |u(x)-u(y)|^2\Ka(x,y;g)\,dv_g(x)dv_g(y)
	\ + \ C_2 \int_M u^2\,dv_g 
	\end{equation} 
	for any $u\in W^{s,p}(M)$. Then $C_1\ge K(n,s,2)$. 
\end{step1}

\begin{proof}
	Let $\eta:[0,+\infty)\to [0,1]$ be a smooth test-function with compact support in $[0,2\delta]$ and such that $\eta\equiv 1$ in $[0,\delta]$. We choose $\delta>0$
such that $2\delta$ is smaller than the injectivity radius of $(M,g)$.  
Given a point $x_0\in M$, $\eps>0$ and $U\in W^{s,2}(\R^n)$ given by 
\begin{equation}
U(x) = (1+|x|^2)^{-\frac{n-2s}{2}}
\end{equation} 
we consider the test-function 
$$
u_\eps(x)=\eta(d_g(x_0,x))U_\eps(x) \qquad \text{where} \qquad
U_\eps(x)=\eps^{-\frac{n-2s}{2}}  U\left( \frac{1}{\eps} exp_{x_0}^{-1}(x) \right).
$$ 
Applying \eqref{BestConstantGen} to $u_\eps$ we obtain 
$$
	\left( \int_M |u_\eps|^{2^*} \,dv_g\right)^\frac{2}{2^*} 
	\le C_1 \iint_{M\times M} |u_\eps(x)-u_\eps(y)|^2\Ka(x,y;g)\,dv_g(x)dv_g(y)
	\ + \ C_2 \int_M u_\eps^2\,dv_g. 
$$
In view of \eqref{CompGrad}, \eqref{BoundL2} and \eqref{BoundL2*}, we can send $\eps\to 0$ to obtain 
$$ \left( \int_{\R^n} |U|^{2^*} \,dx\right)^\frac{2}{2^*} 
\le  C_1 \iint_{\R^n\times \R^n} \frac{|U(x)-U(y)|^2}{|x-y|^{n+2s}}\,dxdy
$$ 
Since $U$ is an extremal for \eqref{BestConstSob} we obtain $C_1\ge K(n,s,2)$. 
\end{proof}


\begin{step2}
Inequality \eqref{BestConstant}  holds. 
\end{step2}

\begin{proof}
	Given $\eps>0$, we take $\delta>0$ smaller than the injectivity radius of $(M,g)$ and a covering of $M$ by balls $\{B_{\delta}(x_i), i=1,\dots,N\}$, such that  for any $i=1,\dots,N$, the properties \eqref{VolComp} and \eqref{DistComp} hold.
	Let $\{\eta_i,\, i=1,\dots,N\}$ be a partition of unity adapted to the covering 
	$\{B_{\delta}(x_i),\,i=1,\dots,N\}$. 
	Then $\|u\|_{2^*}=\|\sum_i \eta_i u\|_{2^*}\le \sum_i \|\eta_iu\|_{2^*}$ and, by Jensen's inequality, we obtain 
	$$ \|u\|_{2^*}^2 \le \frac{1}{N}  \sum_i \|\eta_iu\|_{2^*}^2. $$ 
	We now estimate $\|\eta_iu\|_{2^*}$. Let $v_i=(\eta_iu)\circ \expt_{x_i}$. Then by \eqref{part.norm}, we have
$$
	\int_M |\eta_iu|^{2^*} \,dv_g \le  (1+\eps)  \int_{\R^n} |v_i|^{2^*} \, dv_\xi.
$$
Using \eqref{IneqRn}, we obtain 
\begin{eqnarray*} 
	\|\eta_iu\|_{2^*}^2
	& \le & (1+\eps)A \iint_{\R^n \times \R^n}  \frac{ |v_i(x)-v_i(y)|^2}{|x-y|^{n+2s}}\,dxdy.
\end{eqnarray*} 
		Given $\delta'>0$ small to be specified later, we write the integral in the rhs as 
\begin{align*}
	 \iint_{\R^n \times \R^n}  \frac{ |v_i(x)-v_i(y)|^2}{|x-y|^{n+2s}}\,dxdy  
	&=& \iint_{|x-y|>\delta'} \frac{ |v_i(x)-v_i(y)|^2}{|x-y|^{n+2s}}\,dxdy  \\
	 &+& \iint_{|x-y|<\delta'} \frac{ |v_i(x)-v_i(y)|^2}{|x-y|^{n+2s}}\,dxdy.
\end{align*}
Using that $(a+b)^2\le 2(a^2+b^2)$, we can bound the first integral in the r.h.s. by   
\begin{eqnarray*} 
	\iint_{|x-y|>\delta'}  \frac{ |v_i(x)-v_i(y)|^2}{|x-y|^{n+2s}}\,dxdy  
	& \le & 2 \int_{\R^n} |v_i(x)|^2\left( \int_{|x-y|>\delta'} \frac{dy}{|x-y|^{n+2s}}\right) \,dx \\
	& \le & C_{i}(\delta')^{-2s} \int_{\R^n} |v_i(x)|^2\,dx \\ 
	& \le & C_{n,\eps}(\delta')^{-2s} \int_M \eta_i |u|^2\,dv_g.  
\end{eqnarray*} 		
Thus 

\begin{eqnarray*} 
	\|\eta_iu\|_{2^*}^2
	\le C_{n,\eps}(\delta')^{-2s} \int_M \eta_i |u|^2\,dv_g + 
	(1+\eps)A \iint_{|x-y|<\delta'}  \frac{ |v_i(x)-v_i(y)|^2}{|x-y|^{n+2s}}\,dxdy.
\end{eqnarray*} 	
Denote $U_i:=\expt_{x_i}^{-1}(B_{\delta}(x_i))$, $i=1,\dots,N$. 
Noticing that $v_i(x)=0$ if $x\not\in U_i$, 
we can write the second integral in the r.h.s. as 
\begin{eqnarray*} 
	&& \iint_{|x-y|<\delta'}  \frac{ |v_i(x)-v_i(y)|^2}{|x-y|^{n+2s}}\,dxdy 
	 = \iint_{|x-y|<\delta',\, x\in U_i,y\not\in U_i} \frac{ |v_i(x)-v_i(y)|^2}{|x-y|^{n+2s}}\,dxdy   \\
	&& + \iint_{|x-y|<\delta',\, x\not\in U_i,y\in U_i} \frac{ |v_i(x)-v_i(y)|^2}{|x-y|^{n+2s}}\,dxdy  
	+ \iint_{|x-y|<\delta',\, x,y\in U_i} \frac{ |v_i(x)-v_i(y)|^2}{|x-y|^{n+2s}}\,dxdy    \\ 
	&& = 2\iint_{|x-y|<\delta',\, x\in U_i,y\not\in U_i} \frac{ |v_i(x)-v_i(y)|^2}{|x-y|^{n+2s}}\,dxdy   
	+ \iint_{|x-y|<\delta',\, x,y\in U_i} \frac{ |v_i(x)-v_i(y)|^2}{|x-y|^{n+2s}}\,dxdy.
\end{eqnarray*} 
		Recall that $\text{supp}\,\eta_i$ is a compact subset of $B_{\delta}(x_i)$. 
Denote $K_i:=\expt_{x_i}^{-1}(\text{supp}(\eta_i))$. 
Since $K_i$ is a compact, we can take some $\alpha>0$ such that 
$$ |x-y|\ge \alpha>0 \qquad \text{for any $x\in K_i$, $y\in \R^n\backslash U_i$, and any $i=1,\dots,N$.}$$ 
\begin{eqnarray*} 
	\iint_{|x-y|<\delta',\, x\in U_i,\,y\not\in U_i}  \frac{ |v_i(x)-v_i(y)|^2}{|x-y|^{n+2s}}\,dxdy 
	& = & \iint_{|x-y|<\delta',\, x\in K_i,\,y\not\in U_i}  
	\frac{ |v_i(x)|^2}{|x-y|^{n+2s}}\,dxdy \\  
	& \le & \int_{K_i} |v_i(x)|^2\left( \int_{|x-y|<\delta'} \frac{dy}{\alpha^{n+2s}}\right)\,dx \\
	& \le & C_{\alpha,n,\delta'} \int_{K_i} |v_i(x)|^2\,dx   \\
	& \le & C_{\alpha,n,\delta',\eps} \int_M \eta_i |u|^2\,dv_g. 
\end{eqnarray*} 
We thus obtain 
\begin{eqnarray*} 
	\|\eta_iu\|_{2^*}^2 
	& \le & C_{\alpha,n,p,\delta',\eps} \int_M \eta_i |u|^2\,dv_g 
	+ \ (1+\eps)A \iint_{|x-y|<\delta',\,x,y\in U_i}  
	\frac{ |v(x)-v(y)|^2}{|x-y|^{n+2s}}\,dxdy.  
\end{eqnarray*}

To bound the 2nd integral in the r.h.s. we write 
\begin{eqnarray*} 
	v_i(x)-v_i(y) & = & \eta_i(\expt_{x_i}(x))[ u(\expt_{x_i}(x))-u(\expt_{x_i}(y)) ]  \\
	&&     + u(\expt_{x_i}(y)) [\eta_i(\expt_{x_i}(x)) - \eta_i(\expt_{x_i}(y)) ].  
\end{eqnarray*} 										
Using the inequality $(a+b)^2\le (1+\eps)a^2+C_\eps b^2$, $a,b\ge 0$, we deduce 
\begin{eqnarray*} 
	&& \iint_{|x-y|<\delta',\,x,y\in U_i}  \frac{ |v_i(x)-v_i(y)|^2}{|x-y|^{n+2s}}\,dxdy \\
	&  & \le (1+\eps) \iint_{|x-y|<\delta',\,x,y\in U_i} | \eta_i(\expt_{x_i}(x)) |^2
	\frac{ | u(\expt_{x_i}(x))-u(\expt_{x_i}(y))  |^2}{|x-y|^{n+2s}}\,dxdy \\ 
	&& + C_\eps \iint_{|x-y|<\delta',\,x,y\in U_i}  \frac{ | \eta_i(\expt_{x_i}(x)) - \eta_i(\expt_{x_i}(y))  |^2}{|x-y|^{n+2s}}
	| u(\expt_{x_i}(y))|^2     \,dxdy  \\
	&  =: & I + II. 
\end{eqnarray*} 
Using that $\eta_i\circ \expt_{x_i}$ is Lipschitz with a Lipschitz constant that can be chosen to depend on $\delta$, and thus on $\eps$, and not on $i$, we can bound II: 
\begin{eqnarray*} 
	II 
	& \le & C_{\eps,\delta} \int_{U_i}  | u(\expt_{x_i}(y))|^2    \left( \int_{|x-y|<\delta'} \frac{dx}{|x-y|^{n-(1-s)2}}\right)\,dy \\
	& \le & \frac{C_{\eps,\delta} }{1-s} \int_{U_i}  | u(\expt_{x_i}(y))|^2\,dx \\
	&  \le & C_{\eps,\delta,s} \int_{B_{\delta}(x_i)}  | u|^2\,dv_g.  
\end{eqnarray*}  
	Concerning $I$, 
\begin{eqnarray*} 
	I 
	& \le & (1+C\eps) \iint_{x,y\in B_{\delta}(x_i),\,|\expt_{x_i}^{-1}(x)-\expt_{x_i}^{-1}(y)|<\delta' }
	\eta_i(x)   \frac{|u(x)-u(y)|^2}{|\expt_{x_i}^{-1}(x)-\expt_{x_i}^{-1}(y)|^{n+2s}} dv_g(x)dv_g(y) \\ 
	& \le & 		(1+C\eps)   \iint_{x,y\in B_{\delta}(x_i)} \eta_i(x)  \frac{|u(x)-u(y)|^2}{d_g(x,y)^{n+2s}}\, dv_g(x)dv_g(y) \\
	& \le & 	\Lambda	(1+C\eps)   \iint_{x,y\in B_{\delta}(x_i)} \eta_i(x)|u(x)-u(y)|^2\Ka(x,y;g)\, dv_g(x)dv_g(y)
\end{eqnarray*} 
where we used \eqref{DistComp} and \eqref{K3}. 

We thus obtain 
\begin{eqnarray*} 
	\|\eta_iu\|_{2^*}^2 
	& \le & C \int_M \eta_i |u|^2\,dv_g + C \int_{B_{\delta}(x_i)}  | u|^2\,dv_g  \\
	&&     + \	\Lambda (1+C\eps)A   \int_M \eta_i(x)\left(\int_M |u(x)-u(y)|^2\Ka(x,y;g)\, dv_g(y)\right)\,dv_g(x)
\end{eqnarray*} 	
where $C=C(n,\alpha,s,\delta,\eps)$. 
Since the balls $\{B_{\delta}(x_i)$, $ i=1,\dots,N\}$ overlap  a finite number of times, summing this inequality over $i=1,\dots,N$ gives \eqref{BestConstant}.  
\end{proof}

\end{demTeo}

\section{Applications to equations with a fractional non-local operator}
In this section, we prove Theorem \ref{ExistsSolution}.
The proof is similar to the classical one in the local case (see, e.g. \cite{hebey}), but we prefer to give a full demonstration as some technical differences arise. 

We consider the critical equation \eqref{MainEquK} for $p=2$, which is
\begin{equation}\label{MainEquK2}
\La_{\Ka}u + hu = f|u|^{2^*-2}u.
\end{equation}
We are interested in the weak formulation of that equation given by the following problem:
$$
\iint_{M\times M} (u(x)-u(y))(v(x)-v(y))\Ka(x, y; g) \,dv_g(x)dv_g(y) \ + \ \int_M huv = \int_M f|u|^{2^*-2} uv dv_g(x),
$$
for all $v\in W^{s,2}(M),$ and $u\in  W^{s,2}(M).$
Critical points of $J_{\Ka}$ in $H$ are solutions to the problem \eqref{MainEquK2}, where $J_{\Ka}$ and $H$ are defined in \eqref{defJK} and \eqref{defH}.

\begin{ExistenceProof}
\end{ExistenceProof} 
	Denote $\mu_0:=\inf_{u\in H} J_\Ka(u)$. We shall prove that there is a $u_0\in H$ which attained the infimum in $\mu_0$.
	We approximate the minimization problem by the subcritical problem 
	$\mu_q:=\inf_{u\in H_q} J_\Ka(u)$ where 
	$H_q=\{u\in W^{s,2}(M):\, \int_Mf|u|^q=1\}$, $1\le q<2^*$. 
	Since $q<2^*$, this infimum is attained at some $u_q\in H_q$ due to Theorem \ref{subcritical}. 
	
	Using the constant test-function $1/(\int_M f dv_g)^{1/q}\in H_q$ to estimate $\mu_q$, we obtain 
	$$
	\mu_q\le J_\Ka\left(\frac{1}{(\int_M f dv_g)^{1/q}}\right) \le \dfrac{\|h\|_\infty}{(\int_M f dv_g)^{2/q}}\le C.
	$$
	Since $J_\Ka$ is coercive, we deduce that $(u_q)$ is bounded in $W^{s,2}(M)$. 
	Then $u_q\to u_0$ as $q\to 2^*$ weakly in  $W^{s,2}(M)$ and strongly in $L^2(M)$. 
	Thus $u_0$ is a weak solution of \eqref{MainEquK2}.

	An easy claim is that $\mu_q \to \mu_0$ as $q\to 2^*$. Indeed, given $\eps>0$ there is a $u\in H$ such that
	$J_\Ka(u)\leq \mu_0 + \eps$. It follows that
	$$
	\mu_q\leq J_\Ka\left( \frac{u}{(\int_M f |u|^q )^{1/q}} \right).
	$$
	It is clear that $(\int_M f|u|^q)^{1/q} \to (\int_M f|u|^{2^*})^{1/{2^*}}=1$ as $q\to 2^*$. Then
	$$
	\limsup \mu_q \leq J_\Ka(u) \leq  \mu_0 + \eps. 
	$$
	Hence, $\limsup \mu_q \leq \mu_0$ as $q\to 2^*$.
	
	Conversely, it follows from H\"older’s inequality that 
	$$
	1=\int_M f|u_q|^q dv_g = \int_M f^{1-q/2^*}\left(f^{1/2^*}|u_q| \right)^{q} dv_g \leq \left(\int_M f dv_g\right)^{1-q/2^*}\left( \int_M f|u_q|^{2^*} dv_g\right)^{q/2^*}.
	$$
	Thus we have $1\leq\liminf \left(\int_M f|u_q|^{2^*}\right)$ as $q \to 2^*$. Noting that
	$$
	\mu_0 \leq J_\Ka\left(\frac{u_q}{\left(\int_M f|u_q|^{2^*} dv_g \right)^{1/2^*}}\right)=\frac{\mu_q}{\left(\int_M f|u_q|^{2^*} dv_g \right)^{2/2^*}},
	$$
	we then get that $\mu_0 \leq \liminf\mu_q $ as $q\to 2^*$.
	It follows that $\lim\limits_{q\to 2^*} \mu_q= \mu_0$, and the above claim is proved.

	To prove that $u_0\not\equiv 0$, we use Theorem \ref{TeoBestConst}.  We write 
	\begin{eqnarray*}
		1 & = & \left(\int_M f|u_q|^q dv_g\right)^{2/q}
		\le \left(\max\,f\right)^{2/q} \left(\int_M |u_q|^{2^*}dv_g \right)^{2/2^*} |M|^{(1-q/2^*)2/q}  \\ 
		& \le & \left(\max\,f\right)^{2/q} |M|^{(1-q/2^*)2/q}  
		\left( \left(K(n, s, 2)+\eps\right)\left(\displaystyle \La_{\Ka} u_q, u_q\right)^2 + C_\eps \|u_q\|_2^2  \right)  \\ 
		& \le & \left(\max\,f\right)^{2/q} |M|^{(1-q/2^*)2/q}  
		\left( (K(n, s, 2)+\eps)2\mu_q +  (C_\eps+(K(n, s, 2)+\eps)\|h\|_\infty) \|u_q\|_2^2  \right).
	\end{eqnarray*}
	Since $\limsup_{q\to 2^*}\mu_q\le \mu_0< (2(\max\,f)^{2/2^*}K(n,s,2))^{-1} $ we deduce that 
	$\|u_0\|_2\ge C>0$ so that $u_0\not\equiv 0$.  
	
	It remains to prove that $u_0\in H$ i.e. that $\int_Mf|u_0|^{2^*}=1$. Indeed,
	from the weak convergence $u_q \rightharpoonup u_0$ on $W^{s,2}(M)$, and the fundamental property of 
	the weak limit (the norm of a weak limit is less than or equal to the infimum limit of 
	the norms of the sequence), we have
	$$
	\Vert u_0 \Vert_{s,2} \leq \liminf \ \Vert u_q \Vert_{s,2},
	$$
	from which it follows
	$$
	(\displaystyle \La_{\Ka} u_0, u_0)^2 \leq \liminf \ (\displaystyle \La_{\Ka} u_q , u_q)^2.
	$$
	As  $u_q \to u_0$ strongly on $L^{2}(M)$, we know that $\Vert u_q\Vert_2 \to \Vert u_0 \Vert_2 $ and
	$\int_M h |u_q|^2 \to \int_M h |u_0|^2$. Then it follows
	\begin{equation}\label{weakproperty}
	(\displaystyle \La_{\Ka} u_0, u_0)^2 + \int_M h |u_0|^2 \leq \liminf \left( (\displaystyle \La_{\Ka} u_q , u_q)^2 + \int_M h |u_q|^2 \right).
	\end{equation}
	Taking  $u_0$ as a test function in the equation where $u_0$ is a weak solution, we have 
	that the l.h.s. of \eqref{weakproperty} is equal to $\int_M f |u_0|^{2^*} dv_g$.  
	Analogously, the r.h.s. is $\lim_{q\to 2*} \int_M f |u_q|^{q} dv_g =1.$
	Finally we obtain
	$$
	\int_M f |u_0|^{2^*} dv_g\leq 1.
	$$
	
	To prove $ \int_M f |u_0|^{2^*} dv_g\geq1,$ we note that
$$
		\mu_0 \leq 
		\frac{ J_\Ka(u_0)}{\left(\int_M f|u_0|^{2^*} dv_g \right)^{2/2^*}}.
$$
Since
$
J_\Ka(u_0)=\frac{1}{2} \int_M f|u_0|^{2^*} dv_g,
$
we get then
$$
	\frac{1}{2}\left(\int_M f|u_0|^{2^*} dv_g \right)^{1-2/2^*}
	\geq \mu_0= \lim_{q\to 2*}\mu_q =  \lim_{q\to 2*} J_\Ka(u_q) = \lim_{q\to 2*} \frac{1}{2} \int_M f|u_q|^{q} dv_g=\frac{1}{2},
$$
and the Theorem is proved.

\qed

Using the constant test-function $v=1/\left(\int_M f dv_g\right)^{1/2^*}\in H$ we obtain 
\begin{cor}\label{ExistSolution2} If $f\ge 0$ and $h$ are smooth functions on $M$ such that
	$ J_{\Ka}$ satisfies the coercivity condition \eqref{coerc2} and
$$ 
\frac{\left(\max\,f\right)^{2/2^*}\left(\int_M h dv_g\right)}{\left(\int_M f dv_g\right)^{2/2^*}}<K(n,s,2)^{-1},
$$ 
then \eqref{Cond} holds and thus \eqref{MainEquK2} has a non-trivial solution. 
\end{cor}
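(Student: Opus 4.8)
The plan is to exhibit an explicit competitor for the constrained infimum $\inf_H J_\Ka$ and to verify that the stated hypothesis forces this value below the threshold appearing in \eqref{Cond}; once that is done, Theorem \ref{ExistsSolution} applies verbatim.

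First I would take the \emph{constant} function $v:=\left(\int_M f\,dv_g\right)^{-1/2^*}$ on $M$. This is admissible: since $f\ge 0$ is not identically zero we have $\int_M f\,dv_g>0$ and $\max f>0$, and $\int_M f|v|^{2^*}\,dv_g=\left(\int_M f\,dv_g\right)^{-1}\int_M f\,dv_g=1$, so $v\in H$. The key point is that a constant function annihilates the quadratic form defining $\La_{\Ka}$: because $v(x)-v(y)\equiv 0$, the double integral in \eqref{defJK} vanishes, and hence
\begin{equation*}
\inf_H J_\Ka\ \le\ J_\Ka(v)\ =\ \frac{1}{2}\int_M h|v|^2\,dv_g\ =\ \frac{\int_M h\,dv_g}{2\left(\int_M f\,dv_g\right)^{2/2^*}}.
\end{equation*}

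Next I would simply rearrange the hypothesis. Dividing
\begin{equation*}
\frac{\left(\max f\right)^{2/2^*}\left(\int_M h\,dv_g\right)}{\left(\int_M f\,dv_g\right)^{2/2^*}}\ <\ K(n,s,2)^{-1}
\end{equation*}
by $2\left(\max f\right)^{2/2^*}>0$ yields precisely
\begin{equation*}
\frac{\int_M h\,dv_g}{2\left(\int_M f\,dv_g\right)^{2/2^*}}\ <\ \left(2\left(\max f\right)^{2/2^*}K(n,s,2)\right)^{-1}.
\end{equation*}
Combining this with the estimate above, $\inf_H J_\Ka$ is strictly below the right-hand side of \eqref{Cond}, so \eqref{Cond} holds. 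Since the coercivity condition \eqref{coerc2} is assumed by hypothesis, Theorem \ref{ExistsSolution} then gives that $\inf_H J_\Ka$ is attained at some nonzero $u_0\in H$, and that $u_0$ is a non-trivial weak solution of \eqref{MainEquK2}.

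There is essentially no obstacle here: the argument is a one-line substitution into Theorem \ref{ExistsSolution}. The only point worth a remark is the degenerate regime $\int_M h\,dv_g\le 0$, in which $J_\Ka(v)\le 0$ while the threshold in \eqref{Cond} is strictly positive, so the conclusion is immediate; correspondingly the hypothesis is then trivially satisfied as well, so no inconsistency arises.
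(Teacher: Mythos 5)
Your proof is correct and matches the paper's approach exactly: the paper also uses the constant test function $v = \left(\int_M f\,dv_g\right)^{-1/2^*}$, for which the nonlocal quadratic form vanishes, and the hypothesis rearranges directly into \eqref{Cond}. Nothing further is needed.
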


\section{Test-function computations - case $p=2$}

This section provides some necessary results to prove the Theorem \eqref{TeoBestConst}. We use the assumptions \eqref{K1}-\eqref{K4} about $\Ka$ and the behaviour of the function $U$ given by \eqref{bubble} and its derivatives. 
 Unfortunately, we could not extend these results to the case $p \neq 2$ 
 because there is not enough information about the decay of the derivatives of the corresponding minimizer.

\begin{prop}\label{PropBlowup}
	Given a function $U:\R^n\to \R$ such that $(\displaystyle \La_{\Ka}U,U)<\infty$ and a point 
	$x_0\in M$ consider the function 
	$U_\eps(x)=\eps^{-\frac{n-2s}{2}}U(\frac{1}{\eps}\exp_{x_0}^{-1}(x))$ defined on $B_\delta({x_0})$ with $\delta$ smaller than the injectivity radius of $(M,g)$. 
	Then 
	\begin{equation}\label{I1} 
	\begin{split}
	& \iint_{B_\delta(x_0)\times B_\delta(x_0)} |U_\eps(x)-U_\eps(y)|^2\Ka(x,y;g)\,dv_g(x)dv_g(y)  \\ 
	& = \eps^{n+sp}\iint_{B_{\delta/\eps} \times B_{\delta/\eps} } 
	|U(x)-U(y)|^2\tilde{\Ka}(x,y;g_\eps)\,dv_{g_\eps}(x)dv_{g_\eps}(y).
	\end{split}
	\end{equation}   
	
\end{prop}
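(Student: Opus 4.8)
The plan is to establish \eqref{I1} by a plain change of variables, performed in two stages: first transporting the integral from $B_\delta(x_0)\subset M$ to $B_\delta\subset T_{x_0}M\cong\R^n$ via the exponential chart, and then applying the dilation $x\mapsto\eps x$. Both sides of \eqref{I1} are integrals of non-negative functions, so the identity is meaningful and the manipulations legitimate regardless of integrability; the hypothesis $(\La_{\Ka}U,U)<\infty$, combined with the two-sided bound \ref{K3}, merely guarantees that the common value is finite. Throughout we use $p=2$, so that $sp=2s$ and the exponent $\tfrac{n-2s}{2}$ appearing in $U_\eps$ is exactly $\tfrac{n-sp}{2}$.

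In the first stage I would use that $\exp_{x_0}:B_\delta\to B_\delta(x_0)$ is a diffeomorphism (since $\delta$ is below the injectivity radius) and substitute $x=\exp_{x_0}(\bar x)$, $y=\exp_{x_0}(\bar y)$, turning $dv_g$ into $dv_{\exp_{x_0}^{*}g}$. Setting $w:=U_\eps\circ\exp_{x_0}$, i.e. $w(\bar x)=\eps^{-\frac{n-2s}{2}}U(\bar x/\eps)$, the left-hand side of \eqref{I1} becomes $\iint_{B_\delta\times B_\delta}|w(\bar x)-w(\bar y)|^{2}\,\Ka(\exp_{x_0}(\bar x),\exp_{x_0}(\bar y);g)\,dv_{\exp_{x_0}^{*}g}(\bar x)\,dv_{\exp_{x_0}^{*}g}(\bar y)$.

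In the second stage I would put $\bar x=\eps X$, $\bar y=\eps Y$, so the domains become $B_{\delta/\eps}$, and track three effects. First, $w(\eps X)=\eps^{-\frac{n-2s}{2}}U(X)$, so $|w(\eps X)-w(\eps Y)|^{2}$ contributes the factor $\eps^{-(n-2s)}$. Second, from $g_\eps(X)=(\exp_{x_0}^{*}g)(\eps X)$ one gets $\det g_\eps(X)=\det(\exp_{x_0}^{*}g)(\eps X)$, hence $dv_{\exp_{x_0}^{*}g}(\eps X)=\eps^{n}\,dv_{g_\eps}(X)$, so the two volume elements together contribute $\eps^{2n}$. Third, taking $G=\exp_{x_0}\circ(\eps\,\mathrm{Id})$ in the definition of $\tilde{\Ka}$ from \ref{K4} and using $(\exp_{x_0}\circ(\eps\,\mathrm{Id}))^{*}g=\eps^{2}g_\eps$ gives $\Ka(\exp_{x_0}(\eps X),\exp_{x_0}(\eps Y);g)=\tilde{\Ka}(X,Y;\eps^{2}g_\eps)$, the metric $\eps^{2}g_\eps$ being the one whose rescaled kernel is controlled by \ref{K4}. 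Collecting the powers, $\eps^{2n}\cdot\eps^{-(n-2s)}=\eps^{n+2s}=\eps^{n+sp}$, yields precisely the right-hand side of \eqref{I1}.

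I do not foresee a real obstacle: the statement is an exact change-of-variables identity. The only points deserving attention are the scaling of the Riemannian volume element under the dilation (equivalently, the identity $\det g_\eps(X)=\det(\exp_{x_0}^{*}g)(\eps X)$) and the computation $(\exp_{x_0}\circ(\eps\,\mathrm{Id}))^{*}g=\eps^{2}g_\eps$, which is what makes the rescaled kernel appear as $\tilde{\Ka}(\,\cdot\,,\,\cdot\,;\eps^{2}g_\eps)$ and thereby links the left-hand side of \eqref{I1} to assumption \ref{K4}, so that one can later pass to the limit $\eps\to0$.
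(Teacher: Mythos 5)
Your proof is correct and is essentially the same as the paper's: both perform the two-stage change of variables (the exponential chart $\exp_{x_0}$ followed by the dilation $x\mapsto\eps x$) and collect the same powers of $\eps$ from the rescaled function $U_\eps$, the two Riemannian volume elements, and the kernel, arriving at the factor $\eps^{2n}\cdot\eps^{-(n-2s)}=\eps^{n+sp}$. Your write-up also makes explicit that the metric appearing in the rescaled kernel is $\eps^2 g_\eps$ (the one fed into assumption \ref{K4} later), whereas the paper abbreviates this to $\tilde\Ka(x,y;g_\eps)$ in both the statement and its proof.
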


\begin{proof}

	Consider $T:B_{\delta/\eps} \to B_\delta $ given by $T(x)=\eps x$. 
	Then $T^*\exp_{x_0}^*g(x)=\eps^2 g_\eps(x)$ so that 
	$dv_{T^*(\exp_{x_0})^*g}(x) = \eps^n dv_{g_\eps}(x)$. Then we have
	\begin{eqnarray*}
	I_1 &=& \iint_{B_\delta(x_0)\times B_\delta(x_0)} |U_\eps(x)-U_\eps(y)|^2\Ka(x,y;g)\,dv_g(x)dv_g(y)  \\ 
		& = & \eps^{-n+2s}\iint_{B_\delta \times B_\delta } |U\left(\frac{x}{\eps}\right)-U\left(\frac{y}{\eps}\right)|^2
		{\Ka}(\exp_{x_0}(x),\exp_{x_0}(y);(\exp_{x_0})^*g) \,dv_{(\exp_{x_0})^*g}(x)dv_{(\exp_{x_0})^*g}(y)\\
		& = &\eps^{-n+2s} \iint_{B_{\delta/\eps} \times B_{\delta/\eps}} |U(x)-U(y)|^2{\Ka}(T(\exp_{x_0}(x)),T(\exp_{x_0}(y));T^*(\exp_{x_0})^*g)\\
		& &	\ \ \ \ \ \ \ \ \ \ \ \ \ \ \ \ \ \ \ \ \ \ \ \ \ \ \ \ \ \ \,dv_{T^*(\exp_{x_0})^*g}(x)dv_{T^*(\exp_{x_0})^*g}(y) \\
		& = & \eps^{n+2s} \iint_{B_{\delta/\eps} \times B_{\delta/\eps}} |U(x)-U(y)|^2\tilde{\Ka}( x, y; g_\eps) \,dv_{g_\eps}(x)dv_{g_\eps}(y). \\
	\end{eqnarray*}
\end{proof}


Let $\eta:[0,+\infty)\to [0,1]$ be a smooth test-function with compact support in $[0,2\delta]$ and such that $\eta\equiv 1$ in $[0,\delta]$. We choose $\delta>0$
such that $2\delta$ is smaller than the injectivity radius of $(M,g)$.  
Given a point $x_0\in M$, $\eps>0$ and a  $U\in W^{s,2}(\R^n)$ given by 
$$
U(x) = (1+|x|^2)^{-\frac{n-2s}{2}}
$$
we consider the test-function 
$u_\eps(x)=\eta(d_g(x_0,x))U_\eps(x)$ where 
$U_\eps(x)=\eps^{-\frac{n-2s}{2}}  U\left( \frac{1}{\eps} \exp_{x_0}^{-1}(x) \right)$. 

\begin{prop}
	There holds 
	\begin{equation}\label{CompGrad}
	\limsup_{\eps\to 0} \iint_{M\times M}
	|u_\eps(x)-u_\eps(y)|^2\Ka(x,y;g)\,dv_g(x)dv_g(y) 
	\le \iint_{\R^n\times \R^n} 
	\frac{|U(x)-U(y)|^2}{|x-y|^{n+2s}}\,dxdy.  
	\end{equation}
\end{prop}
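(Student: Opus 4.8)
The plan is to split the integration domain $M\times M$ into the \emph{core} $B_\delta(x_0)\times B_\delta(x_0)$, on which $\eta\equiv1$ and hence $u_\eps=U_\eps$, and its complement; I will show that the core integral converges to the right-hand side of \eqref{CompGrad} while the complementary part tends to $0$. By the symmetry \ref{K2} of $\Ka$, the complement contributes at most
\[
2\iint_{\{d_g(x_0,x)\ge\delta\}\times M}|u_\eps(x)-u_\eps(y)|^2\,\Ka(x,y;g)\,dv_g(x)dv_g(y).
\]

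For the core, since $u_\eps=U_\eps$ there, Proposition \ref{PropBlowup} (with $p=2$) gives
\[
\iint_{B_\delta(x_0)^2}|u_\eps(x)-u_\eps(y)|^2\Ka(x,y;g)\,dv_g\,dv_g=\iint_{B_{\delta/\eps}^2}|U(x)-U(y)|^2\,\bigl(\eps^{n+2s}\tilde\Ka(x,y;\eps^2 g_\eps)\bigr)\,dv_{g_\eps}\,dv_{g_\eps},
\]
and I would pass to the limit $\eps\to0$ by dominated convergence. Condition \ref{K4} gives $\eps^{n+2s}\tilde\Ka(x,y;\eps^2 g_\eps)\to|x-y|^{-(n+2s)}$ uniformly on compact subsets of $\{x\neq y\}$, while $g_\eps\to\xi$ locally uniformly, so $dv_{g_\eps}\to dv_\xi$ and $B_{\delta/\eps}\uparrow\R^n$. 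For an $\eps$-independent dominating function, assuming (as we may, for $\delta$ small) that \eqref{VolComp}--\eqref{DistComp} of Lemma \ref{partitions} hold in the chart at $x_0$ with constant $\tfrac12$, one has $d_g(\exp_{x_0}(\eps x),\exp_{x_0}(\eps y))\ge\tfrac{\eps}{2}|x-y|$, whence by \ref{K3}
\[
\eps^{n+2s}\tilde\Ka(x,y;\eps^2 g_\eps)=\eps^{n+2s}\Ka\bigl(\exp_{x_0}(\eps x),\exp_{x_0}(\eps y);g\bigr)\le 2^{n+2s}\Lambda\,|x-y|^{-(n+2s)},
\]
and likewise $dv_{g_\eps}\le\tfrac32\,dv_\xi$. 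Since $|U(x)-U(y)|^2|x-y|^{-(n+2s)}$ is integrable on $\R^n\times\R^n$, with integral the finite number $[U]_{s,2}^2$, dominated convergence shows the core integral converges to $\iint_{\R^n\times\R^n}\frac{|U(x)-U(y)|^2}{|x-y|^{n+2s}}\,dxdy$, i.e.\ the right-hand side of \eqref{CompGrad}.

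For the error term I would record three elementary facts: (i) $u_\eps\equiv0$ off $B_{2\delta}(x_0)$; (ii) from $U(z)\le|z|^{-(n-2s)}$ and the decay of $\nabla U$, on $\{d_g(x_0,x)\ge\delta/4\}$ one has $|u_\eps|\le C\eps^{(n-2s)/2}$ and $u_\eps$ is $C\eps^{(n-2s)/2}$-Lipschitz; (iii) $\int_{\{d_g(x_0,x)\ge\delta\}}u_\eps^2\,dv_g\le C\eps^{n-2s}$ and $\int_{B_{\delta/2}(x_0)}u_\eps^2\,dv_g=o(1)$ (a direct computation using $0<2s<n$). With these I would split the error integral by the position of $y$. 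If $y\in B_{\delta/2}(x_0)$ then $d_g(x,y)\ge\delta/2$, so $\Ka$ is bounded on that region and $|u_\eps(x)-u_\eps(y)|^2\le2u_\eps(x)^2+2u_\eps(y)^2$ is controlled by (iii). If $y\notin B_{\delta/2}(x_0)$ and $d_g(x,y)\ge\delta/8$, again $\Ka$ is bounded and $|u_\eps|\le C\eps^{(n-2s)/2}$ wherever the integrand is nonzero, by (i)--(ii). If $y\notin B_{\delta/2}(x_0)$ and $d_g(x,y)<\delta/8$, every point of the minimizing geodesic from $x$ to $y$ lies in $M\setminus B_{\delta/4}(x_0)$, so (ii) yields $|u_\eps(x)-u_\eps(y)|^2\le C\eps^{n-2s}\min\{1,d_g(x,y)^2\}=C\eps^{n-2s}m(x,y)$ and the contribution is $\le C\eps^{n-2s}\|m\Ka\|_{L^1(M\times M)}$, finite by \ref{K1}. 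Each piece is $O(\eps^\gamma)$ for some $\gamma>0$, so the error term vanishes and \eqref{CompGrad} follows (in fact the $\limsup$ is a genuine limit).

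The main obstacle will be the passage to the limit in the core: one must control the rescaled kernel $\eps^{n+2s}\tilde\Ka(x,y;\eps^2 g_\eps)$ uniformly in $\eps$, extracting from \ref{K3} together with Lemma \ref{partitions} a single dominating function that is integrable against $|U(x)-U(y)|^2$ over \emph{all} of $\R^n\times\R^n$ — which is exactly where the finiteness of the Gagliardo energy $[U]_{s,2}$ of the bubble enters. The error bookkeeping, though somewhat lengthy, uses only the crude pointwise decay of $U$ and $\nabla U$ and the integrability \ref{K1}.
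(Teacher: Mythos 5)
Your proof is correct, but it takes a genuinely different route from the paper's. For the core $B_\delta(x_0)^2$, you pass to the limit in the rescaled integral by dominated convergence, building from (K\ref{K3}) and the distance comparison of Lemma~\ref{partitions} a single $\eps$-independent majorant $C\,|U(x)-U(y)|^2|x-y|^{-(n+2s)}$ on all of $\R^n\times\R^n$; integrability is exactly $[U]_{s,2}^2<\infty$. The paper instead runs a two-parameter limit: fix $R$, split $B_{\delta/\eps}^2$ into $B_R^2$ and its complement, bound the complement by the tail $\eps_R$ of the Gagliardo energy via (K\ref{K3}), use the locally uniform convergence in (K\ref{K4}) on $B_R^2$ as $\eps\to0$, then send $R\to\infty$. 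The two mechanisms are equivalent (the paper's $\eps_R$ is precisely what the dominated-convergence tail controls), but yours is more compact. For the error, you split by the position of $y$ relative to $B_{\delta/2}(x_0)$ and by whether $d_g(x,y)\gtrless\delta/8$, using the decay of $U$, the $C\eps^{(n-2s)/2}$-Lipschitz bound on $u_\eps$ outside $B_{\delta/4}(x_0)$, and (K\ref{K1}); the paper instead peels off $I_2$ (mixed) and $I_3$ (outer) from the main integral and subdivides $I_2$ into three regions by $d_g(x,y)$, invoking only (K\ref{K3}) and getting explicit rates $O(\eps^{n-2s})+O(\eps^{2s})$. Both bookkeeping schemes are valid (your invocation of (K\ref{K1}) for the near-diagonal piece is harmless and in fact redundant on a compact manifold, since (K\ref{K3}) already yields $m\Ka\in L^1$); the paper's is longer but keeps track of the precise order of the remainders, which is sometimes useful downstream.
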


The proof uses ideas of \cite[Prop. 21]{SV}. 
We split the proof into two steps. 

\begin{step1} There holds 
	\begin{equation}\label{Gradient101}
	\begin{split}
	& \iint_{M\times M} |u_\eps(x)-u_\eps(y)|^2 \Ka(x,y;g)\,dv_g(x)dv_g(y) \\ 
	& \le 
	\iint_{x\in B_{2\delta/\eps} ,\,y\in B_{2\delta/\eps} }
	|U(x)-U(y)|^2\tilde{\Ka}(x,y;g_\eps)\,dv_{g_\eps}(x)dv_{g_\eps}(y)
	+ O(\eps^{n-2s}) + O(\eps^{2s}).  
	\end{split}
	\end{equation}
\end{step1}

\begin{proof} 
	We  write 
	\begin{equation}\label{Gradient10}
	\iint_{M\times M}  |u_\eps(x)-u_\eps(y)|^2 \Ka(x,y;g)\,dv_g(x)dv_g(y) 
	= I_1+2I_2+I_3 
	\end{equation}
	where 
\begin{eqnarray*}
	I_1 &=& \iint_{B_\delta(x_0)\times B_\delta(x_0)}  |u_\eps(x)-u_\eps(y)|^2 \Ka(x,y;g)\,dv_g(x)dv_g(y) \\ 
	I_2 &=& \iint_{x\in B_\delta(x_0),\,y\not\in B_\delta(x_0)}  |u_\eps(x)-u_\eps(y)|^2 \Ka(x,y;g)\,dv_g(x)dv_g(y) \\
	I_3 &=& \iint_{x,y\not\in B_\delta(x_0)}  |u_\eps(x)-u_\eps(y)|^2 \Ka(x,y;g)\,dv_g(x)dv_g(y). \\
\end{eqnarray*}

	According to Prop. \ref{PropBlowup}, 
	\begin{equation}
	I_1 = \eps^{n+sp}\iint_{B_{\delta/\eps} \times B_{\delta/\eps} } 
	|U(x)-U(y)|^2\tilde{\Ka}(x,y;g_\eps)\,dv_{g_\eps}(x)dv_{g_\eps}(y).
	\end{equation}   
	
	We now prove 
	\begin{equation}\label{I3} 
	I_3\le C\eps^{n-2s}.
	\end{equation}
	Indeed 
	$$
	I_3\leq \Lambda \iint_{x,y\not\in B_\delta(x_0)}  \frac{|u_\eps(x)-u_\eps(y)|^2}{d_g(x,y)^{n+2s}}\,dv_g(x)dv_g(y).
	$$
	Here we use again \eqref{K3}.
Indeed for $x\in B_{2\delta}$, let
$$ \tilde u_\eps(x):=u_\eps(\exp_{x_0}(x)) = \eta(|x|)\tilde U_\eps(x), 
\qquad \tilde U_\eps(x)=\eps^{-\frac{n-2s}{2}}U(x/\eps). $$ 
Then 
$$ I_3 \leq \Lambda  \iint_{x,y\not\in B_\delta } 
\frac{|\tilde u_\eps(x)-\tilde u_\eps(y)|^2}
{d_{\exp_{x_0}^*g}(x,y)^{n+2s}}\,dv_{\exp_{x_0}^*g}(x)dv_{\exp_{x_0}^*g}(y). 
$$
There exists $C>0$ such that as bilinear forms, 
\begin{equation}\label{ControlMetric}
C^{-1}\delta_{ij}\le (\exp_{x_0}^*g)(x)\le C\delta_{ij} \qquad 
\text{for $x\in B_{2\delta} $.} 
\end{equation}
It follows that 
\begin{eqnarray*}
I_3 
& \le & C\iint_{x,y\not\in B_\delta } 
	\frac{|\tilde u_\eps(x)-\tilde u_\eps(y)|^2}{|x-y|^{n+2s}}\,dxdy \\ 
& \le & C\iint_{x \in B_{2\delta}\backslash B_\delta,y\not\in B_\delta } 
\frac{|\tilde u_\eps(x)-\tilde u_\eps(y)|^2}{|x-y|^{n+2s}}\,dxdy 
\end{eqnarray*}
where we used that $\tilde u_\eps$ is supported in $B_{2\delta}$ in the second inequality. 
Eventually, for $x,y\in\R^n\backslash B_\delta$, 
\begin{eqnarray*}
&& |\tilde u_\eps(x)-\tilde u_\eps(y)| \\ 
&& \le 1_{|x|,|y|\ge \delta,|x-y|\le \delta/2} 
\max_{\R^n\backslash B_{\delta/2}} \left(|\tilde U_\eps|+|\nabla \tilde U_\eps|\right)
|x-y| 
+ 1_{|x|,|y|\ge \delta,|x-y|\ge \delta/2} 
2 \max_{\R^n\backslash B_{\delta/2}} |\tilde U_\eps|  \\
&& \le C\eps^{n-2s}\left(1_{|x|,|y|\ge \delta,|x-y|\le \delta/2} |x-y| + 
1_{|x|,|y|\ge \delta,|x-y|\ge \delta/2}  \right).
\end{eqnarray*}
Thus 
\begin{eqnarray*}
\eps^{2s-n}I_3
&\le & C \iint_{|x|\le 2\delta,|x-y|\le \delta/2} \frac{dxdy}{|x-y|^{n+2s-2}} 
+ C \iint_{|x|\le 2\delta,|x-y|\ge \delta/2}  \frac{dxdy}{|x-y|^{n+2s}}  \\ 
&\le & 
C\iint_{|x|\le 2\delta,|t|\le \delta/2} \frac{dxdt}{|t|^{n+2s-2}} 
+ C\iint_{|x|\le 2\delta,|t|\ge \delta/2} \frac{dxdt}{|t|^{n+2s}} 
\end{eqnarray*}
which is finite. We deduce \eqref{I3}.

Concerning $I_2$ we will prove that 
\begin{equation}\label{I2}
I_2 
\le \iint_{x\in B_{\delta/\eps},\,y\in B_{2\delta/\eps} \backslash B_{\delta/\eps}} |U(x)-U(y)|^2\tilde{\Ka}(x,y;g_\eps)\,dv_{g_\eps}(x)dv_{g_\eps}(y) 
+ O(\eps^{n-2s}) + O(\eps^{2s}).
\end{equation}
To prove that we write 
$$ I_2 = I_2^1 +  I_2^2 +   I_2^3 $$ with
\begin{align*}
I_2^1 &= \iint_{x\in B_\delta(x_0),\,y\not\in B_\delta(x_0),\, d_g(x,y)<\delta/4} |u_\eps(x)-u_\eps(y)|^2\Ka(x,y;g)\,dv_g(x)dv_g(y) \\
I_2^2 &= \iint_{x\in B_\delta(x_0),\,y\in B_{2\delta}(x_0)\backslash B_\delta(x_0),\, d_g(x,y)\ge \delta/4}  |u_\eps(x)-u_\eps(y)|^2\Ka(x,y;g)\,dv_g(x)dv_g(y) \\
I_2^3 &= \iint_{x\in B_\delta(x_0),\,y\not\in B_{2\delta}(x_0),\, d_g(x,y)\ge \delta/4}|u_\eps(x)-u_\eps(y)|^2\Ka(x,y;g)\,dv_g(x)dv_g(y).\\
\end{align*}
We first check, as we did for $I_3,$ that 
\begin{equation}\label{I21} 
I_2^1\le C\eps^{n-2s}. 
\end{equation}
Indeed 
\begin{eqnarray*}
I_2^1 
&\le  & 
\Lambda \iint_{x\in B_\delta(x_0),\,y\not\in B_\delta(x_0),\, d_g(x,y)<\delta/4} \frac{|u_\eps(x)-u_\eps(y)|^2}{d_g(x,y)^{n+2s}}\,dv_g(x)dv_g(y)\\
& \le  &  
\Lambda \iint_{\delta/2\le |x|\le \delta,\,\delta\le |y|\le 3\delta/2,\, d_{\exp_{x_0}*g}(x,y)<\delta/4} 
\frac{|\tilde u_\eps(x)-\tilde u_\eps(y)|^2}{d_{\exp_{x_0}*g}(x,y)^{n+2s}}\,
dv_{\exp_{x_0}*g}(x)dv_{\exp_{x_0}*g}(y) \\ 
& \le & 
C\iint_{\delta/2\le |x|\le \delta,\,\delta\le |y|\le 3\delta/2,\, |x-y|<C\delta/4} 
\frac{|\tilde u_\eps(x)-\tilde u_\eps(y)|^2}{|x-y|^{n+2s}}\,dxdy
\end{eqnarray*}
where the constant $C$ can be chosen arbitrarily close to 1 up to taking $\delta$ small enough. 
In particular we assume that $\rho:= \delta/2-C\delta/4>0$. 
It follows that any segment $[x,y]$ remains far from $0$ since 
for any $t\in [0,1]$, $|x+t(y-x)|\ge ||x|-t|y-x||=|x|-t|y-x|\ge \rho$. 
Then $\max_{[x,y]} |\tilde U_\eps|+|\nabla \tilde U_\eps|\le C\eps^{n-2s}$. 
Thus 
\begin{eqnarray*}
\eps^{2s-n} I_2^1 
\le C \int_{|x|\le \delta}   \int_{|y|\le \delta,\, |x-y|<C\delta/4} 
\frac{1}{|x-y|^{n+2s-2}}\,dxdy \le C
\end{eqnarray*}
where we used that $s\in (0,1)$ to obtain that the inner integral is bounded by a constant. 
From there, we get \eqref{I21}.

Concerning $I_2^3$, notice that when $x\in B_\delta(x_0)$ and 
$y\not\in B_{2\delta}(x_0)$ we have $d_g(x,y)>\delta$ and $u_\eps(y)=0$. 
Thus 
$$ 
I_2^3 \le \iint_{x\in B_\delta(x_0), y\not\in B_\delta(x_0)} |u_\eps(x)|^2\Ka(x,y;g)\,dv_g(x)dv_g(y). 
$$ 
From \eqref{K3}, we have
$$ 
I_2^3 \le \Lambda \int_{x\in B_\delta(x_0), y\not\in B_\delta(x_0)} \frac{|u_\eps(x)|^2}{d_g(x,y)^{n+2s}}\,dv_g(x)dv_g(y)
\le \Lambda\delta^{-n-2s} Vol_g(M) \int_{x\in B_\delta(x_0)} |u_\eps(x)|^2\,dv_g(x). 
$$ 

In view of \eqref{BoundL2}, we obtain 
$ I_2^3 \le C_\delta \eps^{2s}. $ 

We eventually verify that 
\begin{equation}\label{I22}
I_2^2 \le 
\iint_{x\in B_\delta(x_0),\,y\in B_{2\delta}(x_0) \backslash B_\delta(x_0),\, d_g(x,y)\ge \delta/4} 
|U_\eps(x)-U_\eps(y)|^2\Ka(x,y;g)\,dv_g(x)dv_g(y)
+ O(\eps^{n-2s}) + O(\eps^{2s}). 
\end{equation}
Indeed for $x\in B_\delta(x_0)$ we have $u_\eps(x)= U_\eps(x)$ so that 
\begin{eqnarray*}
	|u_\eps(x)-u_\eps(y)|^2
	&= &|U_\eps(x)-U_\eps(y) +U_\eps(y)-u_\eps(y)|^2 \\
	& = & |U_\eps(x)-U_\eps(y)|^2 +  |U_\eps(y)-u_\eps(y)|^2 + 2 |U_\eps(x)-U_\eps(y)||U_\eps(y)-u_\eps(y)| \\
	& \le & |U_\eps(x)-U_\eps(y)|^2 +  8|U_\eps(y)|^2 + 4 |U_\eps(x)||U_\eps(y)|.
\end{eqnarray*}
As we had for $I_2^3$, we get 
$$
\iint_{x\in B_\delta(x_0),\,y\in B_{2\delta}(x_0) \backslash B_\delta(x_0),\, d_g(x,y)\ge \delta/4} |U_\eps(x)-U_\eps(y)|^2\Ka(x,y;g)\,dv_g(x)dv_g(y) 
$$
$$
\le \Lambda C_\delta  \int_{B_{2\delta}(x_0)} |U_\eps(y)|^2\,dv_g(y) 
\le  C \eps^{2s}.   
$$

Moreover for $x\in B_\delta(x_0)$ and $ y\in B_{2\delta}(x_0) \backslash B_\delta(x_0)$, 
we have $U_\eps(y)\le C\eps^{(n-2s)/2}$ so that  
$$ U_\eps(x)U_\eps(y)\le C U(\exp_{x_0}^{-1}(x)/\eps) $$ 
and then 
\begin{eqnarray*}
	& &  \iint_{x\in B_\delta(x_0),\,y\in B_{2\delta}(x_0) \backslash B_\delta(x_0),\, d_g(x,y)\ge \delta/4} 
	|U_\eps(x)||U_\eps(y)|\Ka(x,y;g)\,dv_g(x)dv_g(y)\\ 
	&&\le  \Lambda \iint_{x\in B_\delta(x_0),\,y\in B_{2\delta}(x_0) \backslash B_\delta(x_0),\, d_g(x,y)\ge \delta/4} 
	\frac{|U_\eps(x)||U_\eps(y)|}{d_g(x,y)^{n+2s}}\,dv_g(x)dv_g(y) 	\\ 
&& \le C  \iint_{|x|\le \delta,\,\delta\le |y|\le 2\delta,\, 
	d_{\exp_{x_0}^*g}(x,y)\ge \delta/4} 
\frac{U(x/\eps)}{d_{\exp_{x_0}^*g}(x,y)^{n+2s}}\,dv_{\exp_{x_0}^*g}(x)dv_{\exp_{x_0}^*g}(y) \\
&& \le C\eps^{-n-2s} \iint_{|x|\le \delta,\,\delta\le |y|\le 2\delta,\, 
	d_{\exp_{x_0}^*g}(x,y)\ge \delta/4} 
\frac{U(x/\eps)}{d_{g_\eps}(x,y)^{n+2s}}\,dv_{\exp_{x_0}^*g}(x)dv_{\exp_{x_0}^*g}(y) \\ 
&& \le C\eps^{n-2s} \iint_{|x|\le \delta/\eps,\,\delta/\eps\le |y|\le 2\delta/\eps,\, 
	|x-y|\ge \delta/(2/\eps)} \frac{U(x)}{|x-y|^{n+2s}}\,dxdy \\ 
&& \le C\eps^{n-2s} \int_{|x|\le \delta/\eps} U(x)\,dx 
\int_{|\xi|\ge \delta/(2/\eps)} |\xi|^{-n-2s}\,d\xi.  
\end{eqnarray*}
Since $\int_{|x|\le \delta/\eps} U(x)\,dx \le C\eps^{-2s}$ and 
$\int_{|\xi|\ge \delta/(2/\eps)} |\xi|^{-n-2s}\,d\xi \le C\eps^{2s}$ we obtain 
$$ \iint_{x\in B_\delta(x_0),\,y\in B_{2\delta}(x_0) \backslash B_\delta(x_0),\, d_g(x,y)\ge \delta/4} 
|U_\eps(x)||U_\eps(y)|\Ka(x,y;g)\,dv_g(x)dv_g(y) \le C\eps^{n-2s}. $$ 
We deduce \eqref{I22}. 

\end{proof}

\begin{step2}
\eqref{CompGrad} holds. 
\end{step2}
	\begin{proof}
	In view of the previous Step, it is enough to prove that 
	\begin{equation}\label{limsupB2delta}
	\limsup_{\eps\to 0}
	\eps^{n+sp}\iint_{B_{2\delta/\eps} \times B_{2\delta/\eps}}
	|U(x)-U(y)|^2\tilde\Ka(x,y;g_\eps)\,dv_{g_\eps}(x)dv_{g_\eps}(y) 
	\le 
	\iint_{\R^n\times\R^n } \frac{|U(x)-U(y)|^2}{|x-y|^{n+2s}}\,dxdy.
	\end{equation}  
	
	Given $R>0$ let 
	$$ \eps_R = \iint_{(\R^n\times \R^n)\backslash (B_R \times B_R )} 
	\frac{|U(x)-U(y)|^2}{|x-y|^{n+2s}}\,dxdy $$ 
	which goes to 0 as $R\to +\infty$  by dominated convergence since $U\in W^{s,2}(\R^n)$. 
	For $\eps>0$, small enough so that $R<2\delta/\eps$, we split the integral in the l.h.s of \eqref{limsupB2delta} as 
	\begin{eqnarray}\label{Gradient30}
	\iint_{B_R \times B_R }	|U(x)-U(y)|^2\tilde \Ka(x,y;g_\eps)\,dv_{g_\eps}(x)dv_{g_\eps}(y)  \hspace{0.5cm} +  \\
	\iint_{(B_{2\delta/\eps}\times B_{2\delta/\eps})\backslash (B_R \times B_R )}	|U(x)-U(y)|^2\tilde 
	\Ka(x,y;g_\eps)\,dv_{g_\eps}(x)dv_{g_\eps}(y). 
	\end{eqnarray}
	Then, for the second integral it holds
	\begin{eqnarray*}
		&& \iint_{(B_{2\delta/\eps}\times B_{2\delta/\eps})\backslash (B_R \times B_R )} 
		|U(x)-U(y)|^2\tilde \Ka(x,y;g_\eps)\,dv_{g_\eps}(x)dv_{g_\eps}(y)  \\ 
		&& \le \Lambda \iint_{(B_{2\delta/\eps}\times B_{2\delta/\eps})\backslash (B_R \times B_R )} 
		\frac{|U(x)-U(y)|^2}{d_{g_\eps}(x,y)^{n+2s}}\,dv_{g_\eps}(x)dv_{g_\eps}(y)  \\ 
		&& \le C \iint_{(B_{2\delta/\eps}\times B_{2\delta/\eps})\backslash (B_R \times B_R )} 
		\frac{|U(x)-U(y)|^2}{|x-y|^{n+2s}}\,dxdy \  \le C \eps_R.  \\ 
	\end{eqnarray*}
	Thus 
	$$ \iint_{B_{2\delta/\eps} \times B_{2\delta/\eps}}
	|U(x)-U(y)|^2\tilde\Ka(x,y;g_\eps)\,dv_{g_\eps}(x)dv_{g_\eps}(y)  
	$$
	$$
	=  \iint_{B_R \times B_R } 	|U(x)-U(y)|^2\tilde\Ka(x,y;g_\eps)\,dv_{g_\eps}(x)dv_{g_\eps}(y) 
	+ O(\eps_R). $$ 
	Moreover, for a given $R>0$, we can send $\eps\to 0$ in \eqref{Gradient30}. From \eqref{K4}, we obtain 
	\begin{eqnarray*}
		&& \lim_{\eps\to 0}	
		\eps^{n+sp}\iint_{B_R \times B_R } 
		|U(x)-U(y)|^2\tilde\Ka(x,y;g_\eps)\,dv_{g_\eps}(x)dv_{g_\eps}(y)   \\
		&& = \iint_{B_R \times B_R } \frac{|U(x)-U(y)|^2}{|x-y|^{n+2s}}\,dxdy  = \iint_{\R^n\times \R^n} \frac{|U(x)-U(y)|^2}{|x-y|^{n+2s}}\,dxdy - \eps_R.  
	\end{eqnarray*}
	
\end{proof}

\begin{prop}\label{LpNorm}
There hold 
\begin{equation}\label{BoundL2}
 \int_M |u_\eps|^2\,dv_g \le C\eps^{2s} 
\end{equation}
and 
\begin{equation}\label{BoundL2*}
 \lim_{\eps\to 0} \int_M |u_\eps|^{2^*}\,dv_g = \int_{\R^n} |U|^{2^*}\,dx. 
\end{equation}
\end{prop}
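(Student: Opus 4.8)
The plan is to work in the exponential normal chart $\exp_{x_0}$ at $x_0$, in which both integrals become integrals over the Euclidean ball $B_{2\delta}\subset\R^n$ (since $u_\eps$ is supported in $B_{2\delta}(x_0)$), and then to perform the rescaling $x=\eps y$ that undoes the concentration of $U_\eps$. Throughout I use that in this chart $dv_g=\sqrt{\det g}\,dx$, with $\sqrt{\det g}\le C$ on the compact set $\overline{B_{2\delta}}$ and $\sqrt{\det g(0)}=1$ (normal coordinates), and that $\eta\le 1$; the metric comparability \eqref{ControlMetric} is also at hand if one prefers to bound $dv_g$ by $C\,dx$ directly.

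For \eqref{BoundL2} I would write, in the chart, $\int_M |u_\eps|^2\,dv_g=\int_{B_{2\delta}}\eta(|x|)^2\,\eps^{-(n-2s)}U(x/\eps)^2\sqrt{\det g(x)}\,dx\le C\eps^{-(n-2s)}\int_{B_{2\delta}}U(x/\eps)^2\,dx$, and then the substitution $x=\eps y$ turns the right-hand side into $C\eps^{-(n-2s)}\eps^n\int_{B_{2\delta/\eps}}U(y)^2\,dy=C\eps^{2s}\int_{B_{2\delta/\eps}}U(y)^2\,dy$. Since $U(y)^2=(1+|y|^2)^{-(n-2s)}$ decays like $|y|^{-2(n-2s)}$, the factor $\int_{B_{2\delta/\eps}}U^2$ is $O(1)$, which yields \eqref{BoundL2}. (In the low-dimensional regime one picks up a factor $|\log\eps|$ when $n=4s$, and $\int_{B_{2\delta/\eps}}U^2\sim(2\delta/\eps)^{4s-n}$ when $n<4s$, so that one gets $C\eps^{n-2s}$ instead; since $n>2s$, all these bounds are still $o(1)$, which is all that the later uses of \eqref{BoundL2} actually require.)

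For \eqref{BoundL2*} I would perform the same change of variables, now using $\frac{(n-2s)2^*}{2}=n$, which gives $\int_M|u_\eps|^{2^*}\,dv_g=\int_{B_{2\delta/\eps}}\eta(\eps|y|)^{2^*}U(y)^{2^*}\sqrt{\det g(\eps y)}\,dy$. As $\eps\to0$ the integrand converges pointwise to $U(y)^{2^*}$, because $\eta(\eps|y|)\to\eta(0)=1$ and $\sqrt{\det g(\eps y)}\to\sqrt{\det g(0)}=1$; it is bounded uniformly in $\eps$ by $C\,U(y)^{2^*}$, which lies in $L^1(\R^n)$ since $U\in L^{2^*}(\R^n)$; and the domain $B_{2\delta/\eps}$ exhausts $\R^n$. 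Dominated convergence then gives \eqref{BoundL2*}.

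I do not expect a real obstacle here: the whole argument is a change of variables followed by dominated convergence. The only point to watch is the size of $\int_{B_{2\delta/\eps}}U^2$ in \eqref{BoundL2}, i.e.\ whether $U\in L^2(\R^n)$ — this is the classical low-dimension subtlety familiar from the Brezis--Nirenberg analysis, and it affects only the exponent of $\eps$, not the qualitative conclusion that is invoked in the proof of Theorem \ref{TeoBestConst}.
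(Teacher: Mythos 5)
Your argument follows the same route as the paper's: pass to the exponential chart at $x_0$, rescale $x=\eps y$, and conclude. For \eqref{BoundL2*} the paper implements the limit by splitting over $B_{R\eps}(x_0)$ and its complement and sending $R\to\infty$, $\eps\to 0$; your direct appeal to dominated convergence is a cleaner packaging of the same estimate, since your uniform domination by $C\,U^{2^*}\in L^1(\R^n)$ is exactly what makes the outer piece small and the inner piece converge.

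The one substantive difference is in \eqref{BoundL2}, and it is to your credit. The paper's proof ends with
\[
\int_M |u_\eps|^2\,dv_g \le C\eps^{2s}\int_{\R^n}|U|^2\,dx,
\]
tacitly treating $\int_{\R^n}|U|^2\,dx$ as finite. Since $U^2=(1+|x|^2)^{-(n-2s)}$, this requires $2(n-2s)>n$, i.e. $n>4s$, which is \emph{not} implied by the standing hypotheses $s\in(0,1)$, $2s<n$ (for example $n=3$, $s=0.9$). You correctly keep the truncated integral $\int_{B_{2\delta/\eps}}U^2$, which is $O(1)$ only when $n>4s$, and grows like $|\log\eps|$ when $n=4s$ and like $\eps^{-(4s-n)}$ when $n<4s$; this replaces the bound by $C\eps^{n-2s}$ (times $|\log\eps|$ in the borderline case), which is still $o(1)$ since $n>2s$. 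As you note, all the downstream uses of \eqref{BoundL2} (in the proof of Theorem~\ref{TeoBestConst} and in the estimates for $I_2^2$, $I_2^3$) only require that this quantity vanish, so the conclusion of the Proposition is safe, but the stated exponent $\eps^{2s}$ should really be qualified. This is a genuine improvement over the paper's own proof.
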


\begin{proof} 
We have 
$$
\int_M |u_\eps|^2\,dv_g
= \eps^{-n+2s}\int_{B_{2\delta}(x_0)} |U_\eps(x)|^2\,dv_{g}
\le \eps^{2s}\int_{B_{2\delta/\eps}} |U|^2\,dv_{g_\eps}
\le C\eps^{2s}\int_{\R^n} |U|^2\,dx.
$$
Moreover, given $R>0$, we have for $\eps$ small enough so that $R\eps<\delta$, that 
$$
\int_M |u_\eps|^{2^*}\,dv_g 
 = \eps^{-n}  \int_{B_{R\eps}(x_0)} |U_\eps|^{2^*}\,dv_g
 + \eps^{-n}  \int_{B_{2\delta}(x_0)\backslash B_{R{\eps}}(x_0)} |u_\eps|^{2^*}\,dv_g  
= I + J.
$$
We have that 
$ I$ tends to  $\int_{\R^n} |U|^{2^*}\,dx$ as ${R\to +\infty}$ and ${\eps\to 0}$. Furthermore, 
$$
 J\le \int_{B_{2\delta/\eps} \backslash B_R } |U|^{2^*}\,dv_{g_\eps}  
\le C\int_{B_{2\delta/\eps} \backslash B_R } |U|^{2^*}\,dx
$$ 
and thus $ J$ goes to $0$ as ${R\to +\infty}$ and ${\eps\to 0}$. 
\end{proof}

\bibliographystyle{plain}
\bibliography{fract.bib}

\end{document}